\newtheorem{lemma}{\bf{Lemma} }[section]
\newtheorem{proposition}{\bf{Proposition}}[section]
\newtheorem{theorem}{\bf{Theorem}}[section]
\newtheorem{remark}{\sc{Remark} }[section]
\def\build#1_#2{\mathrel{\mathop{\kern 0pt#1}\limits_{#2}}}
\begin{document}

\title{Explicit estimates on a mixed Neumann-Robin-Cauchy problem}
\author{Luisa Consiglieri}
\address{Luisa Consiglieri, Independent Researcher Professor, European Union}
\urladdr{\href{http://sites.google.com/site/luisaconsiglieri}{http://sites.google.com/site/luisaconsiglieri}}

\begin{abstract}
We  deal with the existence of weak solutions
for a mixed Neumann-Robin-Cauchy problem.
The existence results
are based on global-in-time  estimates of approximating solutions, and
 the passage to the limit exploits
compactness techniques.
We investigate explicit estimates for solutions of the
parabolic equations with nonhomogeneous boundary conditions and
distributional right hand sides.
The parabolic equation is of divergence form with discontinuous coefficients.
We consider a nonlinear condition on a part of the boundary that the power laws 
(and the Robin boundary condition) appear as particular cases.
\end{abstract}

\keywords{mixed Neumann-Robin-Cauchy problem, discontinuous coefficient,
bounded solution.}

\subjclass[2010]{35B45, 35K20, 35B50, 35R05}
\maketitle

\section{Introduction}

The existence of solutions to partial differential equations (PDE)
is not sufficient whenever the main objective is their application to other branches of the science.
In industrial applications, the physical fields (such as temperature or
potentials) verify PDE in divergence form with a nondifferentiable
leading coefficient. Thus, they do not correspond to the classical
solutions. There is a growing demand for 
the existence of quantitative estimates with explicit constants
due to the application of fixed point arguments \cite{freh,homb,safa}.
The knowledge of the values of the involved constants in the estimates is crucial.

The study of the Dirichlet-Cauchy problem is vast in the literature,
\cite{amann,bv79,giaq-str,lapt,lieb,liu,marino,tolk} to mention a few.
It is known that 
Dirichlet boundary conditions  roughly approximate the reality.
 As a consequence,  the study of the Cauchy problem under
Neumann or Robin boundary conditions has its actuality in the
works \cite{akagi,arwarma,choi,hofm,hung,nitt11,nitt13,wang}.

This work is devoted to the determination of the involved constants
for  the boundary value problems concerned in
the presence of radiative-type conditions on the
 boundary which are typical of
thermodynamic models evolved from engineering practice \cite{shill,druet}. 

The derivation of the estimates is not unique.
It depends on the mathematical choice of what are the most relevant data.
Of course, the most relevant data do not come from a mathematical choice,
but from a bio-chemico-geo-physical choice. With this state of mind,
we detail the proofs in order to be easily changed for other requisites.

The steady-state study can be found in \cite{arxA,arxB}.
 
 Let $[0, T] \subset {\mathbb R}$ be the time interval with $ T >0$,
and   $\Omega\subset \mathbb{R}^n$ ($n\geq 2$) be a bounded  domain
 of class $C^{0,1}$. 
The boundary $\partial\Omega$ is decomposed into two 
 disjoint open subsets, namely $\Gamma$ and  $\partial\Omega\setminus
\bar \Gamma$.
 Moreover we set $Q_T=\Omega\times ]0,T[$, and $\Sigma_T=\Gamma\times ]0,T[$.
Here, we consider the 
nonlinear boundary condition version of the Cauchy problem studied in
\cite{bocc}
\begin{align}
\partial_t u-\nabla\cdot(   \mathsf{A}\nabla u)=
-\nabla\cdot(u{\bf E})&\mbox{ in }Q_T;\label{omega}\\
(\mathsf{A}\nabla u-u{\bf E})\cdot{\bf n}=0&\mbox{ on }(\partial
\Omega\setminus \bar\Gamma)\times ]0,T[; \\
(\mathsf{A}\nabla u-u{\bf E})\cdot{\bf n}+b(u)u
=h&\mbox{ on }\Gamma\times ]0,T[, \label{gama}
\end{align}
for  matrix and vector value functions $\mathsf{A}$ and $\bf E$, respectively.
Here, 
$\bf n$ is the outward unit normal to the boundary $\partial\Omega$.
For the sake of simplicity, we handle with no right hand side data,
namely $\nabla\cdot{\bf f}+f$. We refer that these data can be clearly
included in our main results, as well as some lower order terms
in the differential operator.

The function $b$  satisfies a $(\ell-2)$-growth condition
that includes
\begin{itemize}
\item Neumann: $b(u)\equiv 0$;
\item Robin: $b(u)=b_*$ which constant stands for whether the heat convective
transfer coefficient either the Rayleigh-Jeans radiation approximation;
\item Blackbody radiation: $b(u)=\sigma |u|^3$, with $\sigma$ representing
the Stefan-Boltzmann constant;
\item Wien displacement law: $b(u)=b_*u^4$.
\end{itemize}

\section{Main results}

Let us introduce the following Banach spaces, for $p,q > 1$,
 in the framework of Bochner,
Sobolev and Lebesgue functional spaces:
\begin{align*}
V_{p,q}&=\{v\in W^{1,p}(\Omega):\ 
v|_\Gamma\in L^{q}(\Gamma)\};\\
L^{p,q}(Q_T)&=L^q(0,T;L^p(\Omega));\\
L^{p,q}(\Sigma_T)&=L^q(0,T;L^p(\Gamma));\\
W^{1,q}(0,T;X,Y)&=\{v\in L^q(0,T;X):\ v'\in L^q(0,T;Y)\},
\end{align*}
where $X$ and $Y$ denote Banach spaces such that $X\hookrightarrow Y$.

\begin{remark}\label{dtc}
If there exists a Hilbert space $H$ such that $X\hookrightarrow H=\bar
X\hookrightarrow Y=X'$ then (see, for instance, \cite[p. 106]{show})
\[
W^{1,2}(0,T;X,Y)\hookrightarrow C([0,T];H).
\]
Moreover, if $u,v\in W^{1,2}(0,T;X,X')$ then $(u(t),v(t))_H$ is
absolutely continuous on $[0,T]$, and
\begin{equation}\label{perp}
{d\over dt}(u(t),v(t))_H=\langle u'(t),v(t)\rangle+
\langle v'(t),u(t)\rangle, \quad\mbox{a.e. }t\in [0,T].
\end{equation}
\end{remark}

Throughout this paper, the hypothesis on the coefficients
$\mathsf{A}$ and $b$ are
\begin{description}
\item[(A)] the  $(n\times n)$ matrix-valued function
 $\mathsf{A}=[A_{ij}]_{i,j=1,\cdots,n}$ is
measurable, uniformly elliptic, and uniformly bounded:
\begin{align}\label{amin}
\exists  a_\#>0,\quad &
A_{ij}(x)\xi_i\xi_j\geq a_\#|\xi|^2,
\quad\mbox{ a.e. }x\in\Omega,\ \forall \xi\in\mathbb{R}^n;\\
\exists  a^\#>0,\quad &\|\mathsf{A}\|_{\infty,\Omega}\leq a^\#,\label{amax}
\end{align}
under the summation convention over repeated indices:
$\mathsf{A}{\bf a}\cdot{\bf b}=A_{ij}a_jb_i={\bf b}^\top  \mathsf{A}{\bf a}$.
\item[(B)]
 $b:\Gamma\times \mathbb{R}\rightarrow \mathbb{R}$ is a Carath\'eodory function,
 i.e. measurable with respect to $x\in\Gamma$ and
  continuous with respect to $\xi\in\mathbb R$,
such that there exists $\ell\geq 2$ that  $b$ has
$(\ell-2)$-growthness property, and 
it is monotone with
respect to the last variable:
\begin{align}\label{bmm}
\exists  b_\#,b^\# \geq 0, \qquad  b_\#|\xi|^{\ell-2}
\leq b(x,\xi)\leq  b^\#|\xi|^{\ell-2};\\
(b(x,\xi)\xi-b(x,\eta)\eta)(\xi-\eta)\geq 0,\label{bmon}
\end{align} 
for a.e. $x\in\Gamma$, and for all $\xi,\eta\in \mathbb{R}$.
\end{description}

Let us state our first existence result (the Dirichlet problem
is established in \cite[Lemma 3.2]{bocc}).
\begin{theorem}\label{teom}
Let ${\bf E}\in L^{2/(1-n/q-\theta)}(0,T;{\bf L}^q(\Omega))$
with  $\theta=0$ if $n>2$ and
any $0<\theta<1-2/q$ if $n=2$, $h\in L^{\ell\,'}(\Sigma_T)$, 
with $\ell\,'$ standing for the conjugate exponent $\ell\,'=\ell/(\ell-1)$,
and $u_0\in
L^2(\Omega)$. Under the assumptions (A)-(B) with $b_\#>0$,
 there exists a function $u$ in $L^{2,\infty}(Q_T)\cap 
L^{2}(0,T;V_{2,\ell})\cap 
L^{\ell}(\Sigma_T)$,
which is solution of (\ref{omega})-(\ref{gama}) in the sense that
\begin{align}
\langle\partial_t u,v\rangle +\int_{Q_T}
 \mathsf{A}\nabla u\cdot \nabla v\mathrm{dx}\mathrm{dt}
+\int_{\Sigma_T}b(u)u  v\mathrm{ds}\mathrm{dt}=\nonumber\\
=\int_{Q_T} u{\bf E}\cdot \nabla v\mathrm{dx}\mathrm{dt}
+\int_{\Sigma_T}h  v\mathrm{ds}\mathrm{dt},\label{pbu}
\end{align}
for every $v\in L^2(0,T;V_{2,\ell})\cap L^{\ell}(\Sigma_T)$. 
Here, $\partial_t u$ belongs to $L^2(0,T;(V_{2,\ell})')+
L^{\ell/(\ell-1)}(\Sigma_T)$.
In particular, we have
\begin{align}\label{cotauinf}
\|u\|_{2,\infty,Q_T}^2 &\leq \left(\|u_0\|_{2,\Omega}^2+
{2\over \ell\,'b_\#^{1/(\ell-1)}}\|h \|_{\ell\,',\Sigma_T} ^{\ell\,'}\right)
\exp\left[\mathcal{Q}\right];
\\ \label{cotauu}
a_\#\|\nabla u\|_{2,Q_T}^2+
b_\#\|u\|_{\ell,\Sigma_T}^\ell &\leq \left(\|u_0\|_{2,\Omega}^2+
{2\over \ell\,'b_\#^{1/(\ell-1)}}\|h \|_{\ell\,',\Sigma_T} ^{\ell\,'}\right)
\left(\mathcal{Q}
\exp\left[\mathcal{Q}\right]+1\right),
\end{align}
with
\begin{align*}
\mathcal{Q}={2\over a_\#}
S_{(2+\theta q)/(1+\theta q)}^{2(n+\theta q)/q}|\Omega|^{\theta}
\|{\bf E}\|_{q,2,Q_T}^2+\\
+[(4/a_\#)^{q+n+\theta q}
S_{(2+\theta q)/(1+\theta q)}^{2(n+\theta q)}
|\Omega |^{q\theta}]^{(q-n-\theta q)^{-1}}
\int_0^T\|{\bf E}\|_{q,\Omega}^{2(1-n/q-\theta )^{-1}}\mathrm{dt}.
\end{align*}
\end{theorem}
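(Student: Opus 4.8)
\noindent\emph{Proof plan.} The plan is to construct a solution by a Galerkin scheme, to derive the quantitative bounds (\ref{cotauinf})--(\ref{cotauu}) already for the approximate solutions, and then to pass to the limit by compactness. Concretely, fix a sequence $\{w_k\}_{k\ge1}\subset V_{2,\ell}$ with dense linear span in $V_{2,\ell}$ (hence in $L^2(\Omega)$), and seek $u_m(t)=\sum_{k=1}^m c_k^m(t)w_k$ solving the finite-dimensional system obtained by testing the weak formulation with $w_1,\dots,w_m$, with $u_m(0)$ the $L^2(\Omega)$-projection of $u_0$. By hypothesis (B) the map $\xi\mapsto b(x,\xi)\xi$ is Carath\'eodory with polynomial growth, so the right-hand side of the resulting ODE system is continuous; Carath\'eodory's existence theorem gives a local solution, continued to $[0,T]$ by the uniform bounds below.

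The core is the energy estimate. Testing with $u_m$, and using the ellipticity (\ref{amin}) and the lower bound in (\ref{bmm}), one gets
\[
\tfrac12\tfrac{d}{dt}\|u_m\|_{2,\Omega}^2+a_\#\|\nabla u_m\|_{2,\Omega}^2+b_\#\|u_m\|_{\ell,\Gamma}^\ell
\le\int_\Omega u_m\,{\bf E}\cdot\nabla u_m\,\mathrm{dx}+\int_\Gamma h\,u_m\,\mathrm{ds}.
\]
H\"older and Young with the conjugate pair $(\ell\,',\ell)$ absorb a part of $b_\#\|u_m\|_{\ell,\Gamma}^\ell$ on the left and leave the contribution $\tfrac{2}{\ell\,'b_\#^{1/(\ell-1)}}\|h\|_{\ell\,',\Gamma}^{\ell\,'}$, which after integration in time gives the $h$-term in (\ref{cotauinf})--(\ref{cotauu}). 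For the drift term, H\"older in space with exponents $q$, $2q/(q-2)$, $2$, followed by the Gagliardo--Nirenberg--Sobolev inequality in the form carrying the embedding constant $S_{(2+\theta q)/(1+\theta q)}$ (with the parameter $\theta$ and the powers of $|\Omega|$ produced by the intermediate H\"older steps needed when $n=2$, where the critical Sobolev exponent is infinite), bounds $\|u_m\|_{2q/(q-2),\Omega}$ by a product of $\|\nabla u_m\|_{2,\Omega}^{(n+\theta q)/q}$ and $\|u_m\|_{2,\Omega}^{1-n/q-\theta}$. A further Young inequality with exponents $\tfrac{2q}{q+n+\theta q}$ and $\tfrac{2q}{q-n-\theta q}$ --- the integrability hypothesis ${\bf E}\in L^{2/(1-n/q-\theta)}(0,T;{\bf L}^q(\Omega))$ being precisely what turns the exponent of $\|u_m\|_{2,\Omega}$ into $2$ and keeps the power of $\|{\bf E}\|_{q,\Omega}$ time-integrable --- absorbs $\tfrac{a_\#}{2}\|\nabla u_m\|_{2,\Omega}^2$ on the left. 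What remains is $g(t)\|u_m(t)\|_{2,\Omega}^2$, and tracking the powers of $2/a_\#$, $S_{(2+\theta q)/(1+\theta q)}$ and $|\Omega|$ through these two inequalities yields $\int_0^Tg=\mathcal{Q}$, the two summands of $\mathcal{Q}$ being the two terms of the Gagliardo--Nirenberg--Sobolev splitting.

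One is thus left with
\[
\tfrac{d}{dt}\|u_m\|_{2,\Omega}^2+a_\#\|\nabla u_m\|_{2,\Omega}^2+b_\#\|u_m\|_{\ell,\Gamma}^\ell\le g(t)\|u_m\|_{2,\Omega}^2+\tfrac{2}{\ell\,'b_\#^{1/(\ell-1)}}\|h\|_{\ell\,',\Gamma}^{\ell\,'},
\]
with $m$-independent constants. Gronwall's lemma gives (\ref{cotauinf}) for $u_m$; integrating over $[0,T]$ and inserting that bound gives (\ref{cotauu}) for $u_m$ (the factor $\mathcal{Q}\exp[\mathcal{Q}]+1$ coming from substituting the Gronwall bound into $\int_0^Tg\|u_m\|_{2,\Omega}^2$ plus the data term). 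These uniform bounds yield a subsequence with $u_m\rightharpoonup u$ weakly-$*$ in $L^{2,\infty}(Q_T)$, weakly in $L^2(0,T;V_{2,\ell})$ and in $L^\ell(\Sigma_T)$, with $\partial_t u_m$ bounded in $L^2(0,T;(V_{2,\ell})')+L^{\ell\,'}(\Sigma_T)$. The Aubin--Lions--Simon lemma (using $V_{2,\ell}\hookrightarrow\hookrightarrow L^2(\Omega)$ and the compact trace $V_{2,\ell}\hookrightarrow\hookrightarrow L^2(\Gamma)$) gives $u_m\to u$ strongly in $L^2(Q_T)$ and a.e.\ in $Q_T$, and $u_m\to u$ a.e.\ on $\Sigma_T$; with the $L^\ell(\Sigma_T)$ bound and the growth (\ref{bmm}), the continuity of $\xi\mapsto b(x,\xi)\xi$ gives $b(u_m)u_m\rightharpoonup b(u)u$ weakly in $L^{\ell\,'}(\Sigma_T)$ (alternatively one may invoke the monotonicity (\ref{bmon}) and Minty's trick), and the drift term passes to the limit by Vitali's theorem. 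Passing to the limit in the Galerkin identity against a fixed $w_k$ and using density yields (\ref{pbu}) for every admissible $v$; the initial datum is recovered via Remark~\ref{dtc}, and weak lower semicontinuity transfers (\ref{cotauinf})--(\ref{cotauu}) to $u$.

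The main obstacle is the energy estimate: the drift term $\int_\Omega u_m\,{\bf E}\cdot\nabla u_m$ must be treated so that the gradient is absorbed into the elliptic term while the leftover coefficient of $\|u_m\|_{2,\Omega}^2$ integrates to \emph{precisely} $\mathcal{Q}$. This forces the chain H\"older $+$ Gagliardo--Nirenberg--Sobolev $+$ Young to be run with the exact exponents fixed by the integrability of ${\bf E}$, and each multiplicative constant ($a_\#$, $S_{(2+\theta q)/(1+\theta q)}$, $|\Omega|$) to be propagated faithfully; the borderline case $n=2$, handled through the parameter $\theta$ and the matching powers of $|\Omega|$, is the most delicate point. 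The Galerkin scheme, Gronwall's lemma and the compactness passage are by now routine.
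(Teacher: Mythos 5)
Your proposal follows essentially the same route as the paper: approximate solutions (the paper truncates the drift to $T_m(u_m)\mathbf{E}$ and invokes the Faedo--Galerkin theorem from Showalter's book rather than running the Galerkin ODE system directly, but this is cosmetic), then the identical chain H\"older $+$ interpolation $+$ Sobolev embedding $+$ Young $+$ Gronwall producing exactly $\mathcal{Q}$, and finally an estimate on $\partial_t u_m$ feeding Aubin--Lions so that $b(u_m)u_m$ is identified through a.e.\ convergence on $\Sigma_T$. One caveat: your parenthetical claim that one could alternatively invoke monotonicity and Minty's trick is precisely what the paper points out \emph{fails} here --- the drift term $T_m(u_m)\mathbf{E}\cdot\nabla v$ spoils the monotone structure --- so the compactness argument is not an option but a necessity.
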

\begin{remark}
Observe that Theorem \ref{teom} remains true if  ${\bf E}\in L^r(0,T;{\bf L}^q(\Omega))$
with $2/r+n/q\leq 1$ if $n>2$, and  $2/r+2/q<1$ if $n=2$.
\end{remark}
Hereinafter, $S_p$ denotes the Sobolev constant of continuity under the standard
$W^{1,p}(\Omega)$-norm $(1\leq p<n)$.
Meanwhile, 
$S_{p,q}$ denotes the Sobolev constant of continuity
under the $V_{p,q}$-norm. Other constants occur in $\mathcal{Q}$ if we use
the inequality \cite{maggi}:
\[
\|v\|_{pn/(n-p),\Omega}\leq S_p
\|\nabla v\|_{p,\Omega}+S_1^{1/p_*}
\|v\|_{p_*,\partial\Omega }.
\]
Notice that $V_{p,\ell}=W^{1,p}(\Omega)$
whenever the radiation exponent $\ell \leq p_*=p(n-1)/(n-p)$ if $n>p$.

Next, we establish a maximum principle due to the Moser technique
(see, for instance, \cite{aronson}), with the upper bound being different from
the one established in \cite[Theorem 2.1]{bocc} which
depends on the data in an exponential form,
which is a shortcoming for physical applications.
\begin{theorem}\label{teomm}
 Under   $2/r+n/q<1$,
 $h\in L^\infty(\Sigma_T)$, $h\geq 0$ on $\Sigma_T$, and  $u_0\in
L^\infty(\Omega)$, $u_0>0$ in
$\Omega$, any solution in accordance with Theorem \ref{teom}
satisfies
$ 0\leq u\leq \mathcal{M}$ in $Q_T$, and $ 0\leq u
\leq \left(\mathcal{M}+P_1\right)/b_\#$ 
on $\Sigma_T$, 
if provided by the smallness condition $P_2\leq P$,
with  $\mathcal{M}$, $P_1$, $P_2$, and $P$ being explicitly
given in Proposition \ref{pmax}.
\end{theorem}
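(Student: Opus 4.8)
The idea is to split the assertion into a sign argument giving $u\ge0$ and a Moser iteration giving the upper bounds, carrying every constant explicitly so that $\mathcal{M}$, $P_1$, $P_2$, $P$ emerge as in Proposition~\ref{pmax}. For the lower bound I would test \eqref{pbu} on $(0,t)$ with $v=u^-:=\min\{u,0\}\in L^2(0,T;V_{2,\ell})\cap L^\ell(\Sigma_T)$. After a standard regularization in time (in the spirit of Remark~\ref{dtc}) the parabolic term yields $\tfrac12\|u^-(t)\|_{2,\Omega}^2-\tfrac12\|u_0^-\|_{2,\Omega}^2$, with $u_0^-=0$ because $u_0>0$; by \eqref{amin} the diffusion term is bounded below by $a_\#\|\nabla u^-\|_{2,Q_t}^2$; by (B) the boundary term $\int_{\Sigma_t}b(u)u\,u^-=\int_{\Sigma_t}b(u^-)(u^-)^2\ge0$; and $h\ge0$ forces $\int_{\Sigma_t}h\,u^-\le0$. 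Only the drift $\int_{Q_t}u^-{\bf E}\cdot\nabla u^-$ survives on the right, and it is absorbed verbatim as in the derivation of \eqref{cotauinf}; Gr\"onwall's inequality then forces $u^-\equiv0$, i.e.\ $u\ge0$ a.e.\ in $Q_T$ and on $\Sigma_T$.

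For the upper bound I would fix a level $k\ge k_0:=\max\{\|u_0\|_{\infty,\Omega},(\|h\|_{\infty,\Sigma_T}/b_\#)^{1/(\ell-1)}\}$, put $w=(u-k)^+$, and for $\beta\ge1$ test \eqref{pbu} with the admissible truncated power $v=w\,\min\{w,M\}^{2\beta-2}$, letting $M\to\infty$ (a Steklov regularization legitimises the pairing with $\partial_t u$). This gives, for a.e.\ $t$,
\begin{align*}
\tfrac12\|w^\beta(t)\|_{2,\Omega}^2
+\frac{(2\beta-1)a_\#}{\beta^2}\|\nabla w^\beta\|_{2,Q_t}^2
+\int_{\Sigma_t}\big(b(u)u-h\big)w^{2\beta-1}
\le\int_{Q_t}u\,{\bf E}\cdot\nabla\!\big(w^{2\beta-1}\big),
\end{align*}
where $w^\beta(0)=0$ because $k\ge\|u_0\|_{\infty,\Omega}$. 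On $\{u>k\}$ one has $u=w+k\ge0$, hence $b(u)u\ge b_\#u^{\ell-1}\ge b_\#k^{\ell-1}\ge\|h\|_{\infty,\Sigma_T}$ by the choice of $k_0$, so the boundary integral is nonnegative and may be dropped. Since $\nabla(w^{2\beta-1})=(2\beta-1)w^{2\beta-2}\nabla w$ and $u=w+k\le2\max\{w,k\}$, Young's inequality moves a fraction of $\tfrac{2\beta-1}{\beta^2}\|\nabla w^\beta\|_{2,Q_t}^2$ to the left and leaves $C_\varepsilon\int_{Q_t}|{\bf E}|^2\big(w^{2\beta}+k^2w^{2\beta-2}\big)$; these are treated by H\"older in space ($L^{q/2}\times L^{q/(q-2)}$) and time ($L^{r/2}\times L^{r/(r-2)}$) followed by the Gagliardo--Nirenberg inequality applied to $w^\beta$ and $w^{\beta-1}$, and the strict subcriticality $2/r+n/q<1$ guarantees that the resulting power of $\|\nabla w^\beta\|_{2,Q_t}$ is $<2$, so a further Young step reabsorbs it, leaving only a lower-order tail in powers of $\|w^\beta\|_{2,Q_t}$ and of $k$.

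Combining this with the parabolic embedding $L^{2,\infty}(Q_T)\cap L^2(0,T;W^{1,2}(\Omega))\hookrightarrow L^{2(n+2)/n}(Q_T)$ yields a Moser-type recursion relating $\|w\|_{2\kappa\beta,Q_T}$ to $\|w\|_{2\beta,Q_T}$ (plus controlled powers of $k$), with $\kappa=(n+2)/n>1$ and a constant depending only on $n$, $|\Omega|$, $a_\#$, $b_\#$ and $\|{\bf E}\|_{q,r,Q_T}$; taking $\beta=\kappa^{j}$ and $j\to\infty$, the convergent product of the iteration factors gives $\|w\|_{\infty,Q_T}\le\mathcal{M}_0$ for an explicit $\mathcal{M}_0$, whence $u\le\mathcal{M}:=k_0+\mathcal{M}_0$ in $Q_T$. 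Running the analogous iteration with the trace inequality and now retaining the boundary term $\int_{\Sigma_t}b(u)u\,w^{2\beta-1}\ge b_\#\int_{\Sigma_t}u^{\ell-1}w^{2\beta-1}$ on the left returns, in the limit, a bound of the form $b_\#u\le\mathcal{M}+P_1$ on $\Sigma_T$. Finally $P_2$ is the accumulated multiplicative constant in front of the reabsorbed convection term at a generic iteration level, and $P_2\le P$ is exactly the smallness condition keeping that coefficient below the threshold that makes each absorption (and hence the whole iteration) licit. The main obstacle is precisely this last point: propagating the explicit Gagliardo--Nirenberg/H\"older constant through the absorption of $\nabla\cdot(u{\bf E})$ and through all iteration levels while keeping it effective, which is what turns "$\|{\bf E}\|$ small enough'' into the quantitative requirement $P_2\le P$.
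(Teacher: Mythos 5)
Your two-part strategy (sign of $u$ via the test function $u^-$, boundedness via a Moser iteration) is the same as the paper's, and your nonnegativity argument coincides with Proposition \ref{pmin}. The upper bound, however, is run quite differently, and the difference is not cosmetic. The paper iterates directly on powers of $u^+$ with the truncated test function $\mathcal{G}(u)$, keeps the boundary term throughout (using $b_\#>0$ to dominate the $h$-contribution), and \emph{never absorbs the convection term into the gradient}: the term $\frac{(\beta+1)^2}{2a_\#}\|{\bf E}\|_{q,r,Q_T}^2\|w\|^2_{2q/(q-2),2r/(r-2)}$ is carried as the multiplicative factor in the recursion $\varphi_{N+1}\leq (P\sigma^N)^{\sigma^{-N}}\varphi_N+P_1P_2^{\sigma^{-N}}$ of Lemma \ref{techni}, where it is tamed by the convergence of the infinite product $\prod_N (P\sigma^N)^{\sigma^{-N}}$, which holds for \emph{any} $P\geq 1$. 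The hypothesis $P_2\leq P$ is the d'Alembert ratio-test condition that makes the \emph{additive} series generated by the source terms $u_0$, $h$, $|\Omega|$, $|\Sigma_T|$ converge; it is not a smallness condition on $\|{\bf E}\|$ (since $P$ is proportional to $\|{\bf E}\|_{q,r,Q_T}$, the condition actually becomes easier to satisfy as the drift grows). Your plan attributes $P_2\leq P$ to "keeping the absorption coefficient below threshold," which misidentifies where the hypothesis enters and what it controls.

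Beyond that, two further points. First, your level-set choice $k_0=\max\{\|u_0\|_{\infty,\Omega},(\|h\|_{\infty,\Sigma_T}/b_\#)^{1/(\ell-1)}\}$ and the discarding of the boundary term is a legitimate classical (Aronson--Serrin) route and would plausibly yield \emph{an} $L^\infty$ bound under $2/r+n/q<1$ -- possibly even without any smallness condition -- but the theorem asserts the bound with the \emph{specific} constants $\mathcal{M}$, $P_1$, $P_2$, $P$ of Proposition \ref{pmax}, and your iteration (different test function, different embedding constant $\kappa=(n+2)/n$ versus the paper's $\sigma=1+\frac{2}{n}(1-\frac{2}{r}-\frac{n}{q})$ built from Lemma \ref{sigma}, which also carries the boundary $L^2$-norm) would not reproduce them. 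Second, your trace estimate is only sketched: the paper obtains $u\leq(\mathcal{M}+P_1)/b_\#$ on $\Sigma_T$ by re-inserting $\mathcal{M}$ into (\ref{usigm}), reading off $b_\#\|u^+\|_{\sigma^N,\Sigma_T}$, and letting $N\to\infty$ via (\ref{ninf}); "retaining the boundary term and passing to the limit" needs to be made precise along these lines, since the exponents on the two sides of the boundary inequality do not match and the limit must be taken with care. As written, the proposal is a reasonable alternative outline for boundedness, but it does not prove the statement with the constants it references and misplaces the role of the hypothesis $P_2\leq P$.
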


We state the following existence result (see for instance
\cite[Section 4.4]{lap} in where the divergence free
$\bf E$ is taken into account). We emphasize that the estimate (\ref{cotauup})
is not so
pleasant as we might expect.
\begin{theorem}\label{teomf}
Let  $u_0\in L^1(\Omega)$, $f\in L^1(Q_T)$,
$h\in L^1(\Sigma_T)$, and ${\bf E}\in L^r(0,T;{\bf L}^q(\Omega))$ for
\begin{equation}\label{rnq}
1<r\left(1-{n\over q}\right)<2.
\end{equation}
Under the assumptions (A)-(B) with $b_\#>0$,
 there exists a function $u$ in $L^{1,\infty}(Q_T)\cap 
L^{p}(0,T;V_{p,\ell-1})\cap L^{\ell-1}(\Sigma_T)$
 such that $\partial_t u\in
L^{1}(0,T;[W^{1,p'}(\Omega)]')$, with 
\begin{equation}\label{pqrn}
{n\over q}+{p(n+1)-n\over r}= 1,
\end{equation} 
 satisfying  the variational problem
\begin{align}
\langle\partial_t u,v\rangle +\int_{Q_T}
 \mathsf{A}\nabla u\cdot \nabla v\mathrm{dx}
\mathrm{dt}
+\int_{\Sigma_T}b(u)u  v\mathrm{ds}\mathrm{dt}=\nonumber\\
=\int_{Q_T} u{\bf E}\cdot \nabla v\mathrm{dx}\mathrm{dt}
+\int_{Q_T}fv\mathrm{dx}\mathrm{dt}
+\int_{\Sigma_T}h  v\mathrm{ds}\mathrm{dt}\label{pbuf}
\end{align}
for every $v\in L^\infty(0,T;W^{1,p'}(\Omega))$. For $r(2-p)<2np$, we have
\begin{align}\label{cotauinfp}
\|u\|_{1,\infty,Q_T}+b_\#
\|u\|_{\ell-1,\Sigma_T}^{\ell-1}
 \leq \|u_0\|_{1,\Omega}+
\|f\|_{1,Q_T}+\|h \|_{1,\Sigma_T} := \mathcal{Z};
\\ \label{cotauup}
\|\nabla u\|_{p,Q_T}^p \leq \mathcal{B}
+{rn^2 \left((Z_1)^{2-p} \mathcal{Z}^{(2-p)/ n}+1\right)
\over a_\# (n+2-p(n+1)) (n-1)} 
\left( {b^\#\over b_\#}\mathcal{Z}\right),
\end{align}
with 
\begin{align*}
\mathcal{B}=r\left( T|\Omega|+(Z_2)^{p}\mathcal{Z}^{p(n+1)/n}
\right)+\\
+{rn^2\over a_\# (n+2-p(n+1))(n-1)} 
\left((Z_1)^{2-p} \mathcal{Z}^{2-p\over n}+1\right)
\left( T|\Omega|+2\mathcal{Z}\right)+\\
+{r\over 2(a_\#)^2}\|{\bf E}\|_{q,r,Q_T}^2
(Z_2)^{p(r-2)/r}\left( (Z_1)^{2-p}\mathcal{Z}^p+\mathcal{Z}^{[p(n+1)-2]/n}
\right)+\\
+{2^{(r-2)/2}\over (a_\#)^r}\|{\bf E}\|_{q,r,Q_T}^r\left(
 (Z_1)^{r-p}\mathcal{Z}^p
+(Z_1)^{p(r-2)/2}\mathcal{Z}^{p-r(2-p)/(2n)}
\right),
\end{align*}
where $Z_1=S _p (1+|\Omega|^{1/n}S_1)$ and
$Z_2= S_pS_1T^{1/p} |\Omega|^{1/p+1/n-1}$.
\end{theorem}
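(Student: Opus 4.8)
The plan is to obtain the weak solution of \eqref{pbuf} as a limit of solutions to the $L^2$-problem from Theorem \ref{teom}, so the whole argument rests on a priori estimates that survive the $L^1$-regularization of the data. First I would regularize: replace $u_0$, $f$, $h$ by smooth (or at least $L^2$, respectively $L^{\ell'}$) approximations $u_{0,m}\to u_0$ in $L^1(\Omega)$, $f_m\to f$ in $L^1(Q_T)$, $h_m\to h$ in $L^1(\Sigma_T)$, with the $L^1$-norms uniformly bounded by $\mathcal{Z}$ (up to $\varepsilon$), and keeping $h_m\ge0$ if one wants sign control. Theorem \ref{teom} (with the right-hand side $f_m$ added, as the authors note is admissible) yields solutions $u_m\in L^{2,\infty}(Q_T)\cap L^2(0,T;V_{2,\ell})\cap L^\ell(\Sigma_T)$ with $\partial_t u_m$ in the stated dual space.

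The core is the $L^1$-type energy estimate \eqref{cotauinfp}. I would test \eqref{pbuf} (for $u_m$) with a truncation-type function, the standard device being $v=\psi_\delta(u_m)$ where $\psi_\delta$ is a smooth approximation of $\operatorname{sign}$ (or, to capture the full $L^{1,\infty}$ bound, $v=T_k(u_m)/k$-type objects and then integrate in $k$, \`a la Boccardo--Gallou\"et). The time term produces $\frac{d}{dt}\int_\Omega \Phi_\delta(u_m)\,\mathrm{dx}$ with $\Phi_\delta\to|\cdot|$; the diffusion term $\int \mathsf{A}\nabla u_m\cdot\nabla u_m\,\psi_\delta'(u_m)\ge0$ is discarded by ellipticity; the convection term $\int u_m{\bf E}\cdot\nabla u_m\,\psi_\delta'(u_m)$ is handled by writing it as $\int {\bf E}\cdot\nabla(\text{primitive})$ and using $\nabla\cdot$ is absent on the Neumann part plus the boundary condition on $\Gamma$ — in the sign-function limit this term vanishes (it becomes $\int_{\Sigma_T}(\text{bounded})\,{\bf E}\cdot{\bf n}$ which is controlled, or simply $0$ for the pure sign test since $u{\bf E}\cdot\nabla\psi_\delta(u)$ integrates to a boundary term that is dominated by the Robin term); the boundary term gives $\int_{\Sigma_T}b(u_m)u_m\,\psi_\delta(u_m)\ge b_\#\int_{\Sigma_T}|u_m|^{\ell-1}$ by \eqref{bmm} since $\psi_\delta(s)s\ge0$ and $b(x,s)s\,\psi_\delta(s)\to b(x,s)|s|\ge b_\#|s|^{\ell-1}$. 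Passing $\delta\to0$ and using $\Phi_\delta(u_{0,m})\to|u_{0,m}|$ delivers $\|u_m\|_{1,\infty,Q_T}+b_\#\|u_m\|_{\ell-1,\Sigma_T}^{\ell-1}\le\|u_{0,m}\|_{1,\Omega}+\|f_m\|_{1,Q_T}+\|h_m\|_{1,\Sigma_T}$, i.e. \eqref{cotauinfp} in the limit.

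Next comes the gradient estimate \eqref{cotauup}. Here I would test with $v=\zeta_\delta(u_m)$ where $\zeta_\delta(s)=\operatorname{sign}(s)\big(1-(1+|s|)^{-\gamma}\big)$ for a suitable $\gamma\in(0,1)$ tied to $p$ through \eqref{pqrn}–\eqref{rnq} (the classical $\gamma$ making the gradient term yield $\int_{Q_T}|\nabla u_m|^2(1+|u_m|)^{-1-\gamma}$, which via H\"older with exponents governed by $2/r+n/q$ and the Gagliardo--Nirenberg--Sobolev inequality $\|w\|_{pn/(n-p)}\le S_p\|\nabla w\|_p+S_1^{1/p_*}\|w\|_{p_*,\partial\Omega}$ quoted after Theorem \ref{teom}, together with the already-established $L^{1,\infty}$ bound, upgrades to an $L^p$ bound on $\nabla u_m$). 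The convection term is then estimated by Young's inequality, splitting ${\bf E}$ in $L^q_xL^r_t$ against $\nabla u_m$ in $L^p$ and the remaining factor of $u_m$ in the appropriate Lebesgue--interpolation space, producing exactly the two ${\bf E}$-dependent lines of $\mathcal{B}$ (the $\|{\bf E}\|^2$ and $\|{\bf E}\|^r$ terms), while the boundary term contributes through $b^\#\|u_m\|_{\ell-1,\Sigma_T}^{\ell-1}\le(b^\#/b_\#)\mathcal{Z}$; the constraint $r(2-p)<2np$ is what keeps the interpolation exponents admissible. Collecting the constants $Z_1,Z_2$ as defined, and the purely $u_0,f,h$-driven contributions into the $T|\Omega|$ and $\mathcal{Z}$-power terms, gives \eqref{cotauup} uniformly in $m$.

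Finally I would pass to the limit by compactness. The uniform bounds give $u_m\rightharpoonup u$ weakly in $L^p(0,T;V_{p,\ell-1})$ and weakly-$*$ in $L^{1,\infty}$, $u_m|_\Gamma$ bounded in $L^{\ell-1}(\Sigma_T)$, and $\partial_t u_m$ bounded in $L^1(0,T;[W^{1,p'}(\Omega)]')$ — the latter read off by using each term of \eqref{pbuf} against $v\in L^\infty(0,T;W^{1,p'}(\Omega))$ and the just-proven bounds (note $p'$ is large, dual to the small $p$ from \eqref{pqrn}). Aubin--Lions--Simon then yields $u_m\to u$ strongly in $L^1(Q_T)$ and a.e., hence (after extracting a further subsequence and using the trace/compactness on $\Sigma_T$) $b(u_m)u_m\to b(u)u$ and $u_m{\bf E}\to u{\bf E}$ in the sense needed to pass each integral in \eqref{pbuf}; monotonicity \eqref{bmon} gives a fallback (Minty's trick) should the boundary nonlinearity resist direct a.e. convergence. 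The lower semicontinuity of norms transfers \eqref{cotauinfp} and \eqref{cotauup} to $u$. The main obstacle I anticipate is bookkeeping the exponents in the gradient estimate: choosing $\gamma$ and orchestrating the Gagliardo--Nirenberg interpolation so that the relation \eqref{pqrn} together with \eqref{rnq} and $r(2-p)<2np$ exactly closes the estimate, and tracking every Sobolev constant so that the final bound is the advertised explicit $\mathcal{B}$ with $Z_1,Z_2$ — this is where the ``not so pleasant'' structure of \eqref{cotauup} originates and where a sign error would be easy to make.
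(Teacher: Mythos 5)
Your overall route coincides with the paper's: approximate, obtain the $L^{1,\infty}$--$L^{\ell-1}(\Sigma_T)$ bound by testing with an approximation of $\operatorname{sign}(u_m)$, obtain the gradient bound by testing with $(1+|u_m|)^{-\delta}$-type functions combined with a Gagliardo--Nirenberg interpolation (the paper's Lemma \ref{vpp1} together with the inequality quoted from \cite{lap}), estimate $\partial_t u_m$ by duality against $W^{1,p'}(\Omega)\hookrightarrow C(\bar\Omega)$, and conclude by Aubin--Lions. Two steps, however, are not correct as you state them. First, you cannot invoke Theorem \ref{teom} to produce the approximate solutions: condition \eqref{rnq} only gives $1/r+n/q<1$ and is compatible with $2/r+n/q>1$, so ${\bf E}$ need not satisfy the integrability hypothesis of Theorem \ref{teom}. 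The paper avoids this by keeping the truncated drift $T_m(u_m){\bf E}$ in the approximate problem \eqref{pbumf} (existence for each fixed $m$ then follows from Faedo--Galerkin, the drift coefficient being bounded), and the uniform estimates use only $|T_m(u_m)|\le|u_m|$; if you regularize the data instead, you must also truncate ${\bf E}$ and verify the bounds are uniform in that truncation.

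Second, your justification for discarding the convection term in the $L^1$ estimate would fail: writing it as $\int_{Q_T}{\bf E}\cdot\nabla\Psi_\delta(u_m)$ and integrating by parts produces $\nabla\cdot{\bf E}$ and ${\bf E}\cdot{\bf n}$, neither of which is controlled here (${\bf E}$ is merely in $L^r(0,T;{\bf L}^q(\Omega))$, not divergence-free, and has no normal trace). The correct mechanism --- the one the paper uses with $v=T_1(u_m/\varepsilon)$ --- is pointwise: the factor $u_m\psi_\delta'(u_m)$ is bounded by $1$ and supported on $\{|u_m|<\delta\}$, so the term is dominated by $\int_{Q_\tau\cap\{|u_m|<\delta\}}|{\bf E}|\,|\nabla u_m|$, which tends to $0$ as $\delta\to0$ because $|{\bf E}|\,|\nabla u_m|\in L^1(Q_T)$ for each fixed $m$. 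A smaller remark: your ``Minty fallback'' for the boundary nonlinearity is not actually available --- the paper points out in Section \ref{steom} that the Minty argument fails precisely because of the drift term --- so the a.e.-convergence route via Aubin--Lions (your primary one, and the paper's) is the one that must close the proof.
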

Observe that (\ref{rnq})-(\ref{pqrn}) mean $1<p<(n+2)/(n+1)$.

Under similar  proofs,
we state the corresponding results of Theorems \ref{teom},
and \ref{teomf} under the assumption (\ref{bmm}) with $b_\#=0$.
In the following, 
$K_p$ denotes the constant of continuity of the embedding
$W^{1,p}(\Omega)\hookrightarrow L^{p_*}(\Gamma)$ ($p< n$).
\begin{theorem}\label{tb0}
If the conditions of Theorem \ref{teom} are fulfilled
under $h \in L^{(2_*)',2}(\Sigma_T)$,
where $2_*=2(n-1)/(n-2)$ if $n>2$ and $2_*$ represents any real number 
greater than $2$,
and the assumption (\ref{bmm}) with $b_\#=0$, and $2\leq\ell\leq 3$,
 then the variational problem (\ref{pbu}) admits
at least one solution $u\in L^{2,\infty}(Q_T)\cap 
W^{1,2}(0,T;H^1(\Omega);[H^1(\Omega)]')$ satisfying the following estimates
\begin{align}\label{cotaub0}
\|u\|_{2,\infty,Q_T} &\leq \mathcal{A}
\sqrt{\exp\left[\mathcal{Q}+T\right]};
\\ \label{cotaub1}
a_\#\|\nabla u\|_{2,Q_T}^2 &\leq 2\mathcal{A}^2
\left((\mathcal{Q}+T)
\exp\left[\mathcal{Q}+T\right]+1\right),
\end{align}
with $\mathcal{M}$ according to Theorem \ref{teom},
and
\[\mathcal{A}^2=\|u_0\|_{2,\Omega}^2+
(2/a_\#+1)
(K_s)^2|\Omega |^{2/s-1}\|h \|_{(2_*)',2,\Sigma_T}^2,
\]
where  $s=2$ if $n>2$ and $s=22_*/(2_*+1)$ if $n=2$.
\end{theorem}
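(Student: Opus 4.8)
The plan is to mirror the proof of Theorem \ref{teom}, tracking carefully the places where the coercivity of the boundary term was used, since now $b_\#=0$ destroys the control of $\|u\|_{\ell,\Sigma_T}$ that previously absorbed the boundary forcing. First I would set up a Galerkin (or time-discretization) approximation in $H^1(\Omega)$, producing approximate solutions $u_m$; the monotonicity assumption \eqref{bmon} guarantees that the nonlinear boundary operator $v\mapsto b(v)v$ is well-behaved, and the $(\ell-2)$-growth bound \eqref{bmm} with $2\le\ell\le 3$ ensures that $b(u)u$ lies in a dual space compatible with $H^1(\Omega)\hookrightarrow L^{2_*}(\Gamma)$ (here $2_*=2(n-1)/(n-2)$), so that all integrals in \eqref{pbu} make sense. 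The key structural point is that for $2\le\ell\le 3$ one has $\ell-1\le 2$, hence the trace space $L^{2}(\Gamma)$ (i.e.\ $V_{2,2}=H^1(\Omega)$) already contains the solution, which is why the function space collapses to $W^{1,2}(0,T;H^1(\Omega),[H^1(\Omega)]')$.

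Next I would derive the a priori estimate by testing the equation with $v=u_m$. The terms $\langle\partial_t u_m,u_m\rangle=\tfrac12\tfrac{d}{dt}\|u_m\|_{2,\Omega}^2$ and $\int_{Q_T}\mathsf{A}\nabla u_m\cdot\nabla u_m\ge a_\#\|\nabla u_m\|_{2,Q_T}^2$ are handled as before, and $\int_{\Sigma_T}b(u_m)u_m^2\ge 0$ is simply discarded. The drift term $\int_{Q_T}u_m{\bf E}\cdot\nabla u_m$ is estimated exactly as in Theorem \ref{teom}, producing the factor $\mathcal{Q}$ via the Sobolev–interpolation chain and a Young's inequality that leaves half of $a_\#\|\nabla u_m\|_{2,Q_T}^2$ on the good side. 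The genuinely new term is $\int_{\Sigma_T}h\,u_m$: I would bound it using the trace embedding $H^1(\Omega)\hookrightarrow L^{2_*}(\Gamma)$ with constant $K_s$ (together with Hölder on $\Gamma$ to pass from $L^{s}(\Gamma)$ to $L^{2_*}(\Gamma)$, which is where $s=2$ for $n>2$ and $s=22_*/(2_*+1)$ for $n=2$ and the volume factor $|\Omega|^{2/s-1}$ enter), then absorb the gradient part into the remaining $a_\#/2$ share and the $L^2(\Omega)$ trace part into the Gronwall buffer — this is what forces the extra ``$+T$'' inside the exponential and the $(2/a_\#+1)$ in $\mathcal{A}^2$. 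After a Gronwall argument one obtains \eqref{cotaub0}, and then feeding that bound back into the undifferentiated energy identity yields \eqref{cotaub1}.

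Finally I would pass to the limit $m\to\infty$. The uniform bounds give weak-$*$ convergence of $u_m$ in $L^{2,\infty}(Q_T)$, weak convergence in $L^2(0,T;H^1(\Omega))$, a uniform bound on $\partial_t u_m$ in $L^2(0,T;[H^1(\Omega)]')$, hence strong convergence in $L^2(Q_T)$ and on $\Sigma_T$ by the Aubin–Lions lemma and the compactness of the trace; the identification $u\in W^{1,2}(0,T;H^1(\Omega),[H^1(\Omega)]')$ and the initial condition follow from Remark \ref{dtc}. The linear terms pass to the limit immediately; the drift term $\int_{Q_T}u_m{\bf E}\cdot\nabla v$ uses the strong $L^2$ convergence of $u_m$ against the fixed $v$; and the nonlinear boundary term $\int_{\Sigma_T}b(u_m)u_m v$ is handled by strong convergence of the traces together with the growth bound \eqref{bmm} (dominated convergence, using $\ell\le 3$ so that $|b(u_m)u_m|\lesssim|u_m|^{\ell-1}$ is equi-integrable on $\Sigma_T$ given the $L^2(\Sigma_T)$ bound on traces). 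The main obstacle I anticipate is precisely the boundary forcing estimate without $b_\#$: choosing the right intermediate Lebesgue exponent on $\Gamma$ so that the trace constant $K_s$ appears with the stated volume power and so that everything still gets absorbed by $a_\#\|\nabla u\|_{2}^2$ plus a controllable multiple of $\|u\|_{2,\Omega}^2$ — the restriction $2\le\ell\le 3$ and the particular form of $\mathcal{A}^2$ both come out of making that absorption work cleanly.
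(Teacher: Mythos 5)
Your proposal matches the paper's proof: the paper likewise discards the now merely nonnegative boundary term, re-estimates the forcing term $\int_{\Sigma_\tau}h\,u_m$ via H\"older and the trace embedding $W^{1,s}(\Omega)\hookrightarrow L^{2_*}(\Gamma)$ with $s=2_*n/(2_*+n-1)$, absorbs $\tfrac{a_\#}{4}\|\nabla u_m\|_{2,Q_\tau}^2$ by Young and leaves $\tfrac12\int_0^\tau\|u_m\|_{2,\Omega}^2\mathrm{dt}$ for Gronwall (the source of the extra $+T$ in the exponential and of $(2/a_\#+1)$ in $\mathcal{A}^2$), then proceeds exactly as in the proof of Theorem \ref{teom}. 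The only cosmetic discrepancy is that the H\"older step producing $|\Omega|^{2/s-1}$ is applied on $\Omega$ (lowering the Sobolev exponent from $2$ to $s$ before invoking $K_s$) rather than on $\Gamma$ as you describe, which does not affect the argument.
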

\begin{theorem}\label{tb0f}
If the conditions of Theorem \ref{teomf} are fulfilled
under the assumption (\ref{bmm}) with $b_\#=0$,
then the variational problem (\ref{pbuf}) admits
at least one solution $u\in L^{1,\infty}(Q_T)\cap 
L^{p}(0,T;W^{1,p}(\Omega))$,  
 under (\ref{pqrn})  and $2\leq\ell\leq p+1$, such that $\partial_t u\in
L^{1}(0,T;[W^{1,p'}(\Omega)]')$,
 satisfying the following estimates
\begin{align}\label{cotaub0p}
\|u\|_{1,\infty,Q_T}
 \leq \|u_0\|_{1,\Omega}+
\|f\|_{1,Q_T}+\|h \|_{1,\Sigma_T} := \mathcal{Z};
\\ 
\|\nabla u\|_{p,Q_T}^p \leq \alpha_\ell 
+\beta_\ell\left(\mathcal{B}
 +2^{\ell-2}\beta\left(
(S_{1}^{n(p-1)\over n-p(n-1)}|\Omega|^{n(p-1)^2\over (n-p(n-1))p}
+S_{1}^{n(p-1)\over p})T\mathcal{Z}\right)^{\ell-1}\right),\label{cotauub0p}
\end{align}
with 
\begin{align*}
\left\{\begin{array}{l}
\alpha_\ell=0,\ \beta_\ell=(1-2^{2p-1}\beta)^{-1}\quad\mbox{ if }\ell=p+1
\mbox{ and } \beta< 2^{1-2p}\\
\alpha_\ell=(2^{2\ell-3}\beta)^{p/(p-\ell+1)},\ \beta_\ell=
p/(p-\ell+1)\quad\mbox{ if }\ell<p+1\\
\end{array}\right.\\
\beta= b^\#
T^{1-{(\ell-1)/ p}} |\Gamma |^{1-{(\ell-1)/ p_*}}
K_p^{\ell-1}{rn^2 \left((Z_1)^{2-p} \mathcal{Z}^{(2-p)/ n}+1\right)
\over a_\# (n+2-p(n+1)) (n-1)} .
\end{align*}
\end{theorem}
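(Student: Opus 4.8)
The plan is to follow the proof of Theorem \ref{teomf} almost verbatim: there the lower bound $b_\#>0$ is used only to absorb one boundary term --- the term $C(b^\#/b_\#)\mathcal{Z}$ in (\ref{cotauup}) --- while everywhere else only the upper bound in (\ref{bmm}) and the monotonicity (\ref{bmon}) enter, so the whole machinery survives at $b_\#=0$ once that single step is rerouted. Thus I would first construct approximating solutions $u_m$ exactly as for Theorem \ref{teomf} (regularising the $L^1$ data to $L^2$ data and, on the finite-dimensional problems, adding $\varepsilon|\xi|^{\ell-2}$ to $b$ so as to keep coercivity at the approximate level), so that the argument behind (\ref{cotauinfp}) yields the uniform bound $\|u_m\|_{1,\infty,Q_T}\le\mathcal{Z}$, which is (\ref{cotaub0p}), together with $\varepsilon\|u_m\|_{\ell-1,\Sigma_T}^{\ell-1}\le\mathcal{Z}$. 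Note that $2\le\ell\le p+1$ forces $\ell-1\le p\le p_*$, hence $V_{p,\ell-1}=W^{1,p}(\Omega)$, so the solution automatically belongs to $L^p(0,T;W^{1,p}(\Omega))$ with $\partial_t u\in L^1(0,T;[W^{1,p'}(\Omega)]')$, as required by the statement.

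The core is the uniform gradient bound, for which the right-hand side of (\ref{cotauup}) is useless since the factor $b^\#/b_\#=(b^\#+\varepsilon)/\varepsilon$ blows up. Instead I would redo the energy estimate behind (\ref{cotauup}): testing the approximate equations with the same nonlinearity of $u_m$ used there, every interior contribution is handled as in Theorem \ref{teomf} and reproduces $\mathcal{B}$, while the boundary contribution is at most $C b^\#\|u_m\|_{\ell-1,\Sigma_T}^{\ell-1}$, with $C=rn^2\big((Z_1)^{2-p}\mathcal{Z}^{(2-p)/n}+1\big)\big/\big(a_\#(n+2-p(n+1))(n-1)\big)$ the very constant occurring in (\ref{cotauup}). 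Where the proof of Theorem \ref{teomf} bounds this by $C(b^\#/b_\#)\mathcal{Z}$ through $b_\#\|u_m\|_{\ell-1,\Sigma_T}^{\ell-1}\le\mathcal{Z}$, I would instead exploit $2\le\ell\le p+1$ and estimate $\|u_m\|_{\ell-1,\Sigma_T}^{\ell-1}$ by H\"older in time with exponents $p/(\ell-1)$ and $p/(p-\ell+1)$ (which produces the factor $T^{1-(\ell-1)/p}$), the inclusion $L^{p_*}(\Gamma)\hookrightarrow L^{\ell-1}(\Gamma)$ on each time slice (the factor $|\Gamma|^{1-(\ell-1)/p_*}$) and the trace embedding $W^{1,p}(\Omega)\hookrightarrow L^{p_*}(\Gamma)$ (the factor $K_p^{\ell-1}$), getting $\|u_m\|_{\ell-1,\Sigma_T}^{\ell-1}\le T^{1-(\ell-1)/p}|\Gamma|^{1-(\ell-1)/p_*}K_p^{\ell-1}\|u_m\|_{L^p(0,T;W^{1,p}(\Omega))}^{\ell-1}$; here $\|u_m\|_{p,Q_T}$ is in turn dominated, via the Sobolev/interpolation inequality recalled after Theorem \ref{teom} and $\|u_m\|_{1,\infty,Q_T}\le\mathcal{Z}$, by the $T\mathcal{Z}$-term displayed in (\ref{cotauub0p}). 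Collecting constants produces exactly the coefficient $\beta$ of the statement and an inequality $\|\nabla u_m\|_{p,Q_T}^p\le\mathcal{B}+2^{\ell-2}\beta\big(\|\nabla u_m\|_{p,Q_T}^{\ell-1}+((\cdots)T\mathcal{Z})^{\ell-1}\big)$, the factor $2^{\ell-2}$ coming from $(a+b)^{\ell-1}\le 2^{\ell-2}(a^{\ell-1}+b^{\ell-1})$.

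To close this inequality I would distinguish the two cases of the statement. If $\ell=p+1$ then $\ell-1=p$ and the term $2^{2p-1}\beta\|\nabla u_m\|_{p,Q_T}^p$ is moved to the left-hand side precisely under the smallness condition $\beta<2^{1-2p}$, giving $\alpha_\ell=0$ and $\beta_\ell=(1-2^{2p-1}\beta)^{-1}$. If $\ell<p+1$ then $(\ell-1)/p<1$ and Young's inequality with exponents $p/(\ell-1)$ and $p/(p-\ell+1)$ absorbs half of $\|\nabla u_m\|_{p,Q_T}^p$ into the left-hand side and leaves the additive constant $\alpha_\ell=(2^{2\ell-3}\beta)^{p/(p-\ell+1)}$ with $\beta_\ell=p/(p-\ell+1)$. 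Either way one reaches (\ref{cotauub0p}) with constants independent of the approximation; the bound $\varepsilon\|u_m\|_{\ell-1,\Sigma_T}^{\ell-1}\le\mathcal{Z}$ moreover shows the $\varepsilon$-regularising boundary term is negligible as $\varepsilon\to 0$.

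It remains to pass to the limit. The uniform bounds place $u_m$ in $L^{1,\infty}(Q_T)\cap L^p(0,T;W^{1,p}(\Omega))$ and $\partial_t u_m$ in $L^1(0,T;[W^{1,p'}(\Omega)]')$; since $1<p<(n+2)/(n+1)<n/(n-1)$ one has $W^{1,p'}(\Omega)\hookrightarrow C(\bar\Omega)$, so along the compact chain $W^{1,p}(\Omega)\hookrightarrow\hookrightarrow L^1(\Omega)\hookrightarrow[W^{1,p'}(\Omega)]'$ the Aubin--Lions--Simon lemma gives a subsequence with $u_m\to u$ strongly in $L^p(0,T;L^1(\Omega))$ and a.e.\ in $Q_T$, and $\nabla u_m\rightharpoonup\nabla u$ weakly in $L^p(Q_T)$; a Gagliardo--Nirenberg trace inequality on $\Omega$, combined with the strong $L^1(Q_T)$ convergence and the bound in $L^p(0,T;W^{1,p}(\Omega))$, then upgrades this to $u_m\to u$ in $L^1(\Sigma_T)$, hence a.e.\ on $\Sigma_T$ along a further subsequence. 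In (\ref{pbuf}) the time-derivative term passes to the limit by the distributional definition of $\partial_t u$ and the remaining linear terms by weak convergence; the drift term passes by Vitali's theorem from the a.e.\ convergence and the equi-integrability of $u_m{\bf E}\cdot\nabla v$ provided by the exponent relations (\ref{pqrn}) and $r(2-p)<2np$; the term $\varepsilon\int_{\Sigma_T}|u_m|^{\ell-2}u_m v$ vanishes because $\varepsilon\|u_m\|_{\ell-1,\Sigma_T}^{\ell-1}\le\mathcal{Z}$; and for $\int_{\Sigma_T}b(u_m)u_m v$, the bound $|b(u_m)u_m|\le b^\#|u_m|^{\ell-1}$ with $u_m$ bounded in $L^{\ell-1}(\Sigma_T)$, $\ell-1>1$, gives equi-integrability in $L^1(\Sigma_T)$, so that the Carath\'eodory continuity of $b$, the a.e.\ convergence on $\Sigma_T$ and $v|_{\Sigma_T}\in L^\infty(\Sigma_T)$ (again by $W^{1,p'}(\Omega)\hookrightarrow C(\bar\Omega)$) let Vitali's theorem conclude $b(u_m)u_m\to b(u)u$ in $L^1(\Sigma_T)$; were the pointwise identification delicate one would fall back on the monotonicity (\ref{bmon}) and a Minty-type argument. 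I expect the principal obstacle to be precisely this treatment of the boundary nonlinearity in the absence of coercivity: it is what forces the smallness requirement $\beta<2^{1-2p}$ in the absorption step when $\ell=p+1$, and it is why the restriction $2\le\ell\le p+1$ --- which guarantees $\ell-1\le p\le p_*$, hence a super-linearly integrable trace --- is genuinely needed, both for the a priori estimate and for the limit passage.
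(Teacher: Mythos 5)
Your proposal follows essentially the same route as the paper: the $L^{1,\infty}$ bound carries over unchanged, and the key step is precisely the paper's Lemma~\ref{luell} --- bounding $b^\#\|u_m\|_{\ell-1,\Sigma_T}^{\ell-1}$ via the trace embedding, H\"older in time and the interpolation $\|v\|_{p,\Omega}\leq\|v\|_{n/(n-1),\Omega}^{\lambda}\|v\|_{1,\Omega}^{1-\lambda}$, so that the boundary term is reabsorbed into $\|\nabla u_m\|_{p,Q_T}^{p}$ by smallness when $\ell=p+1$ and by Young's inequality when $\ell<p+1$, exactly as in the paper. The only blemish is the intermediate display where you put $2^{\ell-2}$ on the gradient term instead of $2^{2\ell-3}$ (the extra $2^{\ell-1}$ comes from the factor $2\|\nabla v\|_{p,Q_T}$ inside the lemma), but your final constants $\alpha_\ell$, $\beta_\ell$ and the condition $\beta<2^{1-2p}$ agree with the paper's, so this is only a bookkeeping slip, not a gap.
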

\begin{remark}
The Neumann problem, i.e. $b_\#=b^\#=0$, clearly verifies the smallness condition $\beta=0$. Then, (\ref{cotauub0p}) is satisfied under
$\alpha_\ell=0$, $\beta_\ell=1$, and $\ell=p+1$.
\end{remark}

Finally, we restrict to the minimum principle (cf. Proposition \ref{pmin}).
The explicit upper bound correspondent to $\mathcal{M}$
in Teorem  \ref{teomm} is not straightforward, remaining as open problem.
\begin{theorem}\label{tb0m}
If  $h\in L^{2}(\Sigma_T)$, $h\geq 0$ on $\Sigma_T$, and  $u_0\in
L^2(\Omega)$, $u_0>0$ in $\Omega$, 
any solution in accordance with Theorem \ref{tb0}
is nonnegative in $Q_T$, and its trace is nonnegative on $\Sigma_T$.
\end{theorem}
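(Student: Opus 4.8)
The strategy is the classical sign-preservation argument: one tests the weak formulation (\ref{pbu}) against a time-truncation of the negative part $u^-=\max\{-u,0\}$ of a given solution $u$ in the class of Theorem \ref{tb0}, exploits the favourable signs of the two boundary terms, and closes the estimate by Gr\"onwall's inequality. First I would note that $v=-u^-\mathbf{1}_{[0,t]}$ is an admissible test function for each $t\in[0,T]$: it carries the same Bochner--Sobolev regularity as $u$, since $w\mapsto w^-$ is bounded on the lattice-type spaces at hand with $\nabla u^-=-\nabla u\,\mathbf{1}_{\{u<0\}}$, and the boundary integral is finite because $h\in L^2(\Sigma_T)$ while $u^-|_\Gamma$ inherits the trace integrability of $u$. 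Then I would invoke the chain rule behind Remark \ref{dtc} (the regularization argument of \cite[p.\,106]{show}): $s\mapsto\|u^-(s)\|_{2,\Omega}^2$ is absolutely continuous with $\frac{\mathrm d}{\mathrm ds}\|u^-(s)\|_{2,\Omega}^2=2\langle\partial_t u(s),-u^-(s)\rangle$ for a.e.\ $s$.

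Substituting $v=-u^-\mathbf{1}_{[0,t]}$ into (\ref{pbu}) and using $u^-(0)=u_0^-=0$ (because $u_0>0$ in $\Omega$), the parabolic term produces $\frac12\|u^-(t)\|_{2,\Omega}^2$, the diffusion term is bounded below by $a_\#\int_0^t\|\nabla u^-\|_{2,\Omega}^2\,\mathrm ds$ by (\ref{amin}), the radiative term equals $\int_0^t\!\int_\Gamma b(u)(u^-)^2\mathbf{1}_{\{u<0\}}\ge0$ by (\ref{bmm}) (which still gives $b\ge0$ although $b_\#=0$), and $\int_0^t\!\int_\Gamma h(-u^-)\le0$ since $h\ge0$. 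Discarding the two nonnegative boundary contributions leaves
\[
\tfrac12\|u^-(t)\|_{2,\Omega}^2+a_\#\int_0^t\|\nabla u^-\|_{2,\Omega}^2\,\mathrm ds\le\int_0^t\!\!\int_\Omega u^-\,{\bf E}\cdot\nabla u^-\,\mathrm dx\,\mathrm ds.
\]
The right-hand side is treated exactly as the convective term in the proof of Theorem \ref{teom}: H\"older's inequality, the Gagliardo--Nirenberg--Sobolev interpolation of $\|u^-\|$ between $L^2(\Omega)$ and $\|\nabla u^-\|_{2,\Omega}$, and Young's inequality absorb a small multiple of $\|\nabla u^-\|_{2,\Omega}^2$ into the left-hand side and leave a term $C\int_0^t\|{\bf E}(s)\|_{q,\Omega}^{2(1-n/q-\theta)^{-1}}\|u^-(s)\|_{2,\Omega}^2\,\mathrm ds$, whose weight is $L^1$ in $s$ by the hypothesis on $\bf E$. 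Gr\"onwall's inequality then forces $\|u^-(t)\|_{2,\Omega}=0$ for every $t$, i.e.\ $u\ge0$ a.e.\ in $Q_T$.

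For the trace statement it then suffices to observe that $u^-=0$ as an element of $L^2(0,T;H^1(\Omega))$, so its trace vanishes in $L^2(\Sigma_T)$; since $u=|u|$ and the trace operator is a lattice homomorphism, $\mathrm{tr}\,u=\mathrm{tr}|u|=|\mathrm{tr}\,u|\ge0$, i.e.\ the trace of $u$ is nonnegative on $\Sigma_T$. I expect the only genuinely delicate point to be the justification of the time integration by parts against $u^-$ — that $u^-$ inherits enough regularity in $t$ to apply (\ref{perp}) — which is precisely the regularization argument underlying Remark \ref{dtc}; everything else reproduces, with $u^-$ in place of $u$, estimates already carried out for Theorem \ref{teom}.
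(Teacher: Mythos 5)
Your proposal is correct and follows essentially the same route as the paper, which proves this statement via Proposition \ref{pmin}: test with the negative part (cut off in time), use $b\ge 0$, $h\ge 0$, $u_0\ge 0$ to discard the boundary terms, absorb the convective term into the diffusion, and conclude by Gr\"onwall; the vanishing of the trace then follows from $u^-=0$. If anything, your treatment of the convective term (H\"older, interpolation in $\Omega$, Young, leaving an $L^1$-in-time Gr\"onwall weight) is more explicit than the paper's one-line Young inequality, and your trace argument via the lattice property of the trace operator is cleaner than the paper's.
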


\section{Proof of Theorem \ref{teom}}
\label{steom}

The proof of existence is divided into three canonical steps: existence of approximate solutions (regularization),
derivation of uniform estimates, and passage to the limit.

For each $m\in\mathbb{N}$, if we consider the truncating function 
\begin{equation}\label{trt}
T_m(s)=\min\{m,\max\{-m,s\}\}\quad\mbox{ for }s\in\mathbb{R},
\end{equation}
then there exists at least a weak solution $u_m$ of
\begin{align}
\int_0^T\langle\partial_t u_m,v\rangle\mathrm{dt} +\int_{Q_T}
( \mathsf{A}\nabla u_m+T_m(u_m){\bf E}) \cdot \nabla v\mathrm{dx}
\mathrm{dt}+\nonumber\\
+\int_{\Sigma_T} b(u_m)u_mv\mathrm{ds}\mathrm{dt}
=\int_{\Sigma_T} h v\mathrm{ds}\mathrm{dt},\qquad\forall v\in L^2(0,T;
V_{2,\ell}),\label{pbum}
\end{align}
which belongs to
$ L^{2,\infty}(Q_T)\cap L^2(0,T;V_{2,\ell})$ such that
 $\partial_t u_m\in L^2(0,T;(V_{2,\ell})')$. The existence
is true due to the Faedo-Galerkin method \cite[Theorem 4.1, p. 120]{show}.

In order to pass to the limit as $m$ tends to infinity,
we seek for  estimates independent on $m$.

\subsection{Proof of the estimates (\ref{cotauinf})-(\ref{cotauu})
for $u_m$}
\label{est}

 Let us take  $v=\chi(t,\tau)u_m$
 as a test function in (\ref{pbum}), where
 $\chi(t,\tau)$ is the characteristic function of the open interval $]0,\tau[$,
with $\tau$ being a fixed number lesser than $T$.
Applying the assumptions
(\ref{amin}) and (\ref{bmm}) with $b_\#>0$, it follows that
\begin{align}
{1\over 2}\int_{\Omega}|u_m|^2(\tau)\mathrm{dx}
+a_\#\int_{Q_\tau}|\nabla u_m|^2\mathrm{dx}
\mathrm{dt}+b_\#\int_{\Sigma_T} |u_m|^\ell\mathrm{ds}\mathrm{dt}\leq\nonumber\\
\leq 
{1\over 2}\|u_0\|_{2,\Omega}^2+
\int_0^{\tau} \|u_m\|_{2q/(q-2),\Omega}\|{\bf E}\|_{q,\Omega}
\|\nabla u_m\|_{2,\Omega}\mathrm{dt}
+\|h \|_{\ell\,',\Sigma_\tau} \|u_m\|_{\ell,\Sigma_\tau} \leq\nonumber\\
\leq 
{1\over 2}\|u_0\|_{2,\Omega}^2+I
+{1\over \ell\,'b_\#^{1/(\ell-1)}}\|h \|_{\ell\,',\Sigma_\tau} ^{\ell\,'}+
{b_\#\over \ell}\|u_m\|_{\ell,\Sigma_\tau} ^\ell,\label{stepm}
\end{align}
by considering  the property $|T_m(u)|\leq |u|$,
and the H\"older and Young inequalities.

For any $0<\lambda<1$ such that
\[
\lambda={2s\over(s-2)q}\quad \left(\Leftrightarrow
{q-2\over 2q}={\lambda \over s}+{1-\lambda\over 2}\right),
\]
the H\"older inequality yields
\begin{equation}\label{hold}
 \|u_m\|_{2q/(q-2),\Omega}\leq  \|u_m\|_{s,\Omega}^\lambda
  \|u_m\|_{2,\Omega}^{1-\lambda}.
  \end{equation}
Taking $\lambda=n/q+\theta$ with $\theta=0$ if $n>2$ and
any $0<\theta<1-2/q$ if $n=2$  to uniform the Sobolev constants for dimensions
$n>2$ and $n=2$, we use 
 the Sobolev embedding $W^{1,ns/(s+n)}(\Omega) \hookrightarrow L^s(\Omega)$
followed by the H\"older inequality 
\begin{equation}\label{sn2}
\|u_m\|_{s,\Omega}\leq S_{ns/(s+n)}|\Omega |^{1/s+1/n-1/2}\left(
\|\nabla u_m\|_{2,\Omega}+\|u_m\|_{2,\Omega}\right),
\end{equation}
where $s=2(n+\theta q)(n-2+\theta q)^{-1}$.
Gathering (\ref{hold}) and (\ref{sn2}) we deduce
\begin{align*}
I	
\leq {a_\#\over 2}\|\nabla u_m\|_{2,Q_\tau} ^2+
\int_0^{\tau} \|u_m\|_{2,\Omega}^2\left({
[S_{ns/(s+n)}|\Omega |^{1/s+1/n-1/2}]^{2\lambda}\over a_\#}
\|{\bf E}\|_{q,\Omega}^2+\right.\\ \left. +
{1-\lambda\over 2}\left({2(\lambda +1)\over a_\#}\right)^{\lambda +1\over 1-
\lambda}[S_{ns/(s+n)}|\Omega |^{1/s+1/n-1/2}]^{2\lambda/(1-\lambda)}
\|{\bf E}\|_{q,\Omega}^{2/(1-\lambda)}
\right)\mathrm{dt}.
\end{align*}

Introducing the above inequality in (\ref{stepm}) we find
\begin{align*}
\|u_m\|_{2,\Omega}^2(\tau)
+a_\#\|\nabla u_m\|_{2,Q_\tau}^2
+b_\#\|u_m\|_{\ell ,\Sigma_\tau} ^\ell\leq
\|u_0\|_{2,\Omega}^2+
{2\over \ell\,'b_\#^{1/(\ell-1)}}\|h \|_{\ell\,',\Sigma_\tau} ^{\ell\,'}+
\\
+\int_0^{\tau} \|u_m\|_{2,\Omega}^2\left(
{2\over a_\#}S_{2n(n+\theta q)/(n^2+(n+2)\theta q)}^{2(n/q+\theta)}
|\Omega |^{2\theta/n}\|{\bf E}\|_{q,\Omega}^2+\right. \\ \left.
+[(4/a_\#)^{1+n/q+\theta} 
S_{2n(n+\theta q)\over n^2+(n+2)\theta q}^{2(n/q+\theta)}
|\Omega |^{2\theta/n}]^{q(q-n-\theta q)^{-1}}
\|{\bf E}\|_{q,\Omega}^{2(1-n/q-\theta )^{-1}}\right)
\mathrm{dt}.
\end{align*}
By applying the Gronwall inequality, we conclude (\ref{cotauinf})
for $u_m$, and consequently (\ref{cotauu}).

\subsection{Passage to the limit in (\ref{pbum}) as $m\rightarrow\infty$}

According to Section \ref{est}
 we may extract a subsequence of $\{u_m\}$ still denoted by $\{u_m\}$
such that $u_m\rightharpoonup u$ in
$L^2(0,T;V_{2,\ell})$,
and  $u_m\rightharpoonup u$ *-weakly in
$L^\infty(0,T;L^2(\Omega))$. In particular,  $u_m(T)\rightharpoonup z$ in
$L^2(\Omega)$, and using (\ref{bmm}) there exists a positive constant
such that
\[
\|b(u_m)u_m\|_{\ell/(\ell -1),\Sigma_T}\leq
\|u_m\|_{\ell,\Sigma_T}^{\ell -1}\leq C.
\]
 Thus, at least a subsequence $b(u_m)u_m$ weakly converges to $w$ in 
 $L^{\ell/(\ell-1)}(\Sigma_T)$.
 
Let us pass to the limit in (\ref{pbum}) by (\ref{perp})
(see  \cite[Theorem 4.1, p. 120]{show}).
Indeed,  by (\ref{perp}) we have the integration per parts formula
for all $\psi\in C^\infty([0,T])$ and $v\in V_{2,\ell}$,
\begin{align*}
(u_m(T),\psi(T)v)+
(u_0,\psi(0)v)=\int_0^T
\langle \partial_t u_m(t),\psi(t)v\rangle+
\langle \psi'(t)v,u_m(t)\rangle\mathrm{dt}=\\
=-\int_{Q_T}\psi(t)
( \mathsf{A}\nabla u_m(t)+T_m(u_m(t)){\bf E}) \cdot \nabla v\mathrm{dx}
\mathrm{dt}
+\\+\int_{\Sigma_T} \psi(t)(h(t)-b(u_m(t))u_m(t)) v\mathrm{ds}\mathrm{dt}+
\int_0^T \psi'(t)(v ,u_m(t))\mathrm{dt} ,
\end{align*}
where $(\cdot,\cdot)$ stands for the inner product of $L^2(\Omega)$.

Passing to the limit as $m\rightarrow\infty$ in the above equality,
we see that the triple $(z,u,w)$ satisfies 
\begin{align*}
(z,\psi(T)v)+(u_0,\psi(0)v)
=-\int_{Q_T}\psi(t)
( \mathsf{A}\nabla u(t)+u(t){\bf E}) \cdot \nabla v\mathrm{dx}
\mathrm{dt}
+\\+\int_{\Sigma_T} \psi(t)(h(t)-w(t)) v\mathrm{ds}\mathrm{dt}+
\int_0^T \psi'(t)(v ,u(t))\mathrm{dt} .
\end{align*}
If $\psi(T)=\psi(0)=0$, we find
\begin{align}\label{dtlim}
 \langle \partial_t u,v\rangle
=-\int_{\Omega}
( \mathsf{A}\nabla u+u{\bf E}) \cdot \nabla v\mathrm{dx}
+\int_{\Gamma}(h-w) v\mathrm{ds},
\quad\mbox{a.e. in }]0,T[.
\end{align}
If $\psi(T)=\psi(0)=0$, $u(T)=z$. 

It remains to prove that $w=b(u)u$.
Observe that 
the weak convergences are not sufficient
to that, since the argument of
the Minty trick fails,
although the coercivity (\ref{amin})
of $\mathsf{A}$ and the monotonicity property (\ref{bmm}) of $b$, 
because the existence of the term $T_m(u_m){\bf E} \cdot\nabla v$.
In order to apply the Aubin-Lions Lemma,
let us estimate $\partial_t u_m$ in $ L^{1}(0,T;(V_{2,\ell})')$.

For every $v\in V_{2,\ell}$, and for almost all $t\in ]0,T[$, we have
\begin{align*}
|\langle\partial_t u_m(t),v\rangle|\leq
a^\#\|\nabla u_m(t)\|_{2,\Omega}\|\nabla v\|_{2,\Omega}+\\ +
(b^\#\|u_m(t)\|_{\ell,\Gamma}^{\ell-1}+\|h(t)\|_{\ell\,',\Gamma})
\|v\|_{\ell,\Gamma}
+\|u_m(t)\|_{2q/(q-2),\Omega}\|{\bf E}(t)
\|_{q,\Omega}\|\nabla v\|_{2,\Omega}.
\end{align*}
Using (\ref{hold})-(\ref{sn2}), it follows that 
\begin{align*}
\|\partial_t u_m(t)\|_{(V_{2,\ell})'}\leq
a^\#\|\nabla u_m(t)\|_{2,\Omega}+
(b^\#\|u_m(t)\|_{\ell,\Gamma}^{\ell-1}+\|h(t)\|_{\ell\,',\Gamma})+\\
+S_{(2+\theta q)/(1+\theta q)}^{n/q+\theta}
|\Omega |^{\theta/n}\left(\|\nabla u_m(t)\|_{2,\Omega}^\lambda
\|u_m(t)\|_{2,\Omega}^{1-\lambda}+\|u_m(t)\|_{2,\Omega}\right)
\|{\bf E}(t)\|_{q,\Omega},
\end{align*}
where  $\lambda=n/q+\theta$ with $\theta=0$ if $n>2$ and
any $0<\theta<1-2/q$ if $n=2$.
 Since the inclusion of the spaces $L^r(0,T)\subseteq L^2(0,T)\subseteq
L^{\ell/(\ell-1)}(0,T) \subseteq L^1(0,T)$
 holds, applying the Minkowski and Young inequalities
we deduce
\begin{align*}
\|\partial_t u_m\|_{L^1(0,T;(V_{2,\ell})')}\leq
T^{1/ 2}
(a^\#+S_{(2+\theta q)/( 1+\theta q)})\|\nabla u_m\|_{2,Q_T}
+\\ +T^{1/ \ell}
b^\#(\|u_m\|_{\ell,\Sigma_T}^{\ell-1} +\|h\|_{\ell\,',\Sigma_T})+\\
+T^{1-{1/ r}}\left(|\Omega |^{\theta\over n(1-\lambda)}
T^{-\lambda}
+S_{(2+\theta q)/(1+\theta q)}^{n/q+\theta}
|\Omega |^{\theta/ n}\right)\|{\bf E}\|_{q,r,Q_T}
\|u_m\|_{2,\infty,Q_T}.
\end{align*}
The sequence on the right-hand side of this last relation is uniformly bounded
due to the estimates (\ref{cotauinf})-(\ref{cotauu}).
By the Aubin-Lions Lemma, 
 $\{u_m\}$ is relatively compact into $L^{q,{\ell/(\ell-1)}}(Q_T)$ for any $q<2n/(n-2)$,
and $L^{q,{\ell/(\ell-1)}}(\Sigma_T)$ for any $q<2(n-1)/(n-2)$.

Passing to the limit as $m\rightarrow\infty$ in (\ref{pbum}),
we see that the function $u$ satisfies (\ref{pbu}). 

\section{Minimum and maximum principles}
\label{sminmax}

 The objective of this section is
the proof of Theorem \ref{teomm}
by making recourse of the minimum and maximum principles.
It will be consequence of
 Propositions \ref{pmin} and \ref{pmax}.

\begin{proposition}[Minimum principle]\label{pmin}
Let $u$ solve (\ref{pbu}).
 Under $b_\#\geq 0$ in (\ref{bmm}),
$h\geq 0$ on $\Sigma_T$, and $u_0\geq 0$ in
$\Omega$, we have that $u\geq 0$ in $Q_T$ as well as its trace 
on $\Sigma_T$.
\end{proposition}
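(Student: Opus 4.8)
The plan is to test the weak formulation \eqref{pbu} with the negative part $v = u^- := \max\{-u,0\}$, more precisely with its time-truncated version $v = \chi(t,\tau) u^-(t)$ for $\tau < T$, to establish that $u^-$ vanishes. To justify this choice we first note that $u^- \in L^2(0,T;V_{2,\ell}) \cap L^\ell(\Sigma_T)$ since $u$ is in that space and truncation at $0$ preserves it; moreover $u \in W^{1,2}(0,T;V_{2,\ell},(V_{2,\ell})')$, so by Remark \ref{dtc} the map $t \mapsto \|u^-(t)\|_{2,\Omega}^2$ is absolutely continuous and
\[
\int_0^\tau \langle \partial_t u, u^-\rangle \,\mathrm{dt} = -\tfrac12\|u^-(\tau)\|_{2,\Omega}^2 + \tfrac12\|u^-(0)\|_{2,\Omega}^2,
\]
where $u^-(0) = u_0^- = 0$ by the hypothesis $u_0 \geq 0$. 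The standard chain-rule identities $\nabla u \cdot \nabla u^- = -|\nabla u^-|^2$ (a.e., since $\nabla u = -\nabla u^-$ on $\{u<0\}$ and both vanish a.e. on $\{u=0\}$) and $u\,{\bf E}\cdot\nabla u^- = u^-\,{\bf E}\cdot\nabla u^-$ on $\{u<0\}$ will be used to rewrite each term.

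Carrying this out, the elliptic term contributes $-a_\#\|\nabla u^-\|_{2,Q_\tau}^2$ after using \eqref{amin}; the boundary term $\int_{\Sigma_\tau} b(u)u\,u^-\,\mathrm{ds}\,\mathrm{dt}$ equals $-\int_{\Sigma_\tau} b(-u^-)(-u^-)u^-\,\mathrm{ds}\,\mathrm{dt}$; by the growth bound \eqref{bmm}, $b(x,\xi)\geq b_\#|\xi|^{\ell-2}\geq 0$, so $b(-u^-)(u^-)^2 \geq 0$ and this term has a sign that helps (it contributes $-b_\#\|u^-\|_{\ell,\Sigma_\tau}^\ell \leq 0$ on the good side). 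The right-hand side source term $\int_{\Sigma_\tau} h\,u^-\,\mathrm{ds}\,\mathrm{dt} \leq 0$ because $h \geq 0$ and $u^- \geq 0$. The only term requiring care is the convection term $\int_{Q_\tau} u^-\,{\bf E}\cdot\nabla u^-\,\mathrm{dx}\,\mathrm{dt}$, which is handled exactly as in Section \ref{est}: by Hölder, the interpolation \eqref{hold}, the Sobolev embedding \eqref{sn2}, and Young's inequality, one absorbs $\tfrac{a_\#}{2}\|\nabla u^-\|_{2,Q_\tau}^2$ into the left side and is left with a term $\int_0^\tau g(t)\|u^-(t)\|_{2,\Omega}^2\,\mathrm{dt}$ for an integrable $g$ (the same $g$ whose integral is $\mathcal{Q}$).

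Assembling the pieces, we arrive at
\[
\|u^-(\tau)\|_{2,\Omega}^2 + a_\#\|\nabla u^-\|_{2,Q_\tau}^2 + 2b_\#\|u^-\|_{\ell,\Sigma_\tau}^\ell \leq \int_0^\tau g(t)\,\|u^-(t)\|_{2,\Omega}^2 \,\mathrm{dt},
\]
and Gronwall's inequality (with zero initial datum) forces $\|u^-(\tau)\|_{2,\Omega} = 0$ for every $\tau \in [0,T]$, hence $u \geq 0$ a.e. in $Q_T$. Feeding this back, the inequality also gives $\|u^-\|_{\ell,\Sigma_\tau}^\ell = 0$ when $b_\# > 0$; when $b_\# = 0$ one still concludes $\nabla u^- = 0$ a.e. so $u^-$ is (a.e. in $t$) constant in $\Omega$, equal to its $L^2(\Omega)$-norm which is $0$, and since the trace operator is continuous the trace of $u^-$ on $\Gamma$ vanishes as well, giving $u \geq 0$ on $\Sigma_T$ in all cases. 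The main obstacle is purely technical: rigorously justifying that $u^-$ (or rather $\chi(t,\tau)u^-$) is an admissible test function and that the integration-by-parts in time holds with the stated boundary values — this is where Remark \ref{dtc} and the regularity $u \in L^2(0,T;V_{2,\ell})$ with $\partial_t u \in L^2(0,T;(V_{2,\ell})') + L^{\ell'}(\Sigma_T)$ are invoked; everything after that mirrors the energy estimate already performed in Section \ref{est}.
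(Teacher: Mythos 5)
Your proof is correct and follows essentially the same route as the paper: test the weak formulation with (the time-truncated) negative part of $u$, use the ellipticity \eqref{amin}, the sign of the boundary terms coming from $b_\#\geq 0$ and $h\geq 0$, absorb the convection term via H\"older/Sobolev interpolation and Young as in Section \ref{est}, and conclude by Gronwall with zero initial datum. If anything, you are more explicit than the paper about the admissibility of the test function, the interpolation needed to put the convection term into Gronwall form, and the recovery of the trace statement.
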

\begin{proof} The classical choice of  $v=u^-=\min\{u,0\}$ as a test function
in (\ref{pbum}) implies that, for almost all values $\tau$ in $]0,T[$
\begin{equation}
\int_\Omega (u^-)^{2}(\tau)\mathrm{dx}+{a_\#\over 2}\int_{Q_\tau}
 |\nabla u^-|^2\mathrm{dx}
\mathrm{dt}
\leq {1\over 2a_\#}\int_{Q_\tau} (u^-)^{2}|{\bf E}|^2\mathrm{dx}
\mathrm{dt},
\label{min}
\end{equation}
by considering the assumptions (\ref{amin}) and (\ref{bmm}) with $b_\#\geq 0$,
and the Young inequality.
Therefore, by applying the Gronwall inequality, we conclude that
$u^-=0$ in $Q_T$.

Introducing this fact in (\ref{min}), and letting
$\tau\rightarrow T$, it follows that the trace function $u\geq 0$
on $\Sigma_T$.
\end{proof}

In order to state our maximum principle,
we begin by establishing some preliminary results. The first one deals with
the well known interpolation result, which is a direct consequence of the
H\"older inequality.
\begin{lemma}\label{pp1}
If $w\in L^{q,q_1}(Q_T)\cap L^{r,r_1}(Q_T)$, then 
$w\in  L^{p,p_1}(Q_T)$, where
\[
{1\over p}={\lambda\over q}+{1-\lambda\over r},\qquad
{1\over p_1}={\lambda\over q_1}+{1-\lambda\over r_1},\qquad (\lambda\geq 1).
\]
Moreover,
\[
\|w\|_{p,p_1,Q_T}\leq 
\|w\|_{q,q_1,Q_T}^\lambda
\|w\|_{r,r_1,Q_T}^{1-\lambda}.
\]
\end{lemma}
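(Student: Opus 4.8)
The plan is to reduce the space--time statement to two successive applications of H\"older's inequality: first in the spatial variable $x\in\Omega$ for a.e.\ fixed $t$, then in the time variable. The endpoint cases $\lambda\in\{0,1\}$ (and the cases where one of the exponents is infinite, handled by the obvious modification with essential suprema) are trivial, so assume $0<\lambda<1$ and all exponents finite.

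\emph{Step 1 (interpolation in space, a.e.\ $t$).} Fix $t\in\,]0,T[$. Writing $|w(x,t)|^{p}=|w(x,t)|^{\lambda p}\,|w(x,t)|^{(1-\lambda)p}$ and applying H\"older's inequality in $x$ with the exponents $q/(\lambda p)$ and $r/((1-\lambda)p)$, which are conjugate precisely because ${1/p}={\lambda/q}+{(1-\lambda)/r}$, yields the classical log-convexity bound
\[
\|w(\cdot,t)\|_{p,\Omega}\leq \|w(\cdot,t)\|_{q,\Omega}^{\lambda}\,\|w(\cdot,t)\|_{r,\Omega}^{1-\lambda},
\qquad\mbox{a.e. }t\in\,]0,T[.
\]
In particular $t\mapsto\|w(\cdot,t)\|_{p,\Omega}$ is measurable, being dominated by a product of two measurable functions (measurability of $t\mapsto\|w(\cdot,t)\|_{q,\Omega}$ and $t\mapsto\|w(\cdot,t)\|_{r,\Omega}$ comes from $w\in L^{q,q_1}(Q_T)\cap L^{r,r_1}(Q_T)$).

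\emph{Step 2 (interpolation in time).} Raise the previous inequality to the power $p_1$ and integrate over $]0,T[$:
\[
\int_0^T\|w(\cdot,t)\|_{p,\Omega}^{p_1}\,\mathrm{dt}\leq
\int_0^T\|w(\cdot,t)\|_{q,\Omega}^{\lambda p_1}\,\|w(\cdot,t)\|_{r,\Omega}^{(1-\lambda)p_1}\,\mathrm{dt}.
\]
Now apply H\"older's inequality in $t$ to the right-hand side with the exponents $q_1/(\lambda p_1)$ and $r_1/((1-\lambda)p_1)$, which are conjugate exactly because ${1/p_1}={\lambda/q_1}+{(1-\lambda)/r_1}$; this gives
\[
\int_0^T\|w(\cdot,t)\|_{p,\Omega}^{p_1}\,\mathrm{dt}\leq
\left(\int_0^T\|w(\cdot,t)\|_{q,\Omega}^{q_1}\,\mathrm{dt}\right)^{\lambda p_1/q_1}
\left(\int_0^T\|w(\cdot,t)\|_{r,\Omega}^{r_1}\,\mathrm{dt}\right)^{(1-\lambda)p_1/r_1}.
\]
Taking the $p_1$-th root, the right-hand side equals $\|w\|_{q,q_1,Q_T}^{\lambda}\,\|w\|_{r,r_1,Q_T}^{1-\lambda}$, which is finite by hypothesis; hence $w\in L^{p,p_1}(Q_T)$ and the asserted norm estimate holds.

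There is essentially no hard part here; the only point deserving care is the bookkeeping that makes both pairs of H\"older exponents conjugate — this is forced by the two defining relations on $(p,p_1)$ — together with the routine handling of the degenerate/infinite-exponent cases. For this reason I would keep the exposition to the two displays above plus a sentence on measurability, which is exactly the level at which the statement is used later (e.g.\ to interpolate norms of $\mathbf E$ and of $u$ in the proofs of the subsequent theorems).
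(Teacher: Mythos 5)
Your proof is correct and is exactly the argument the paper intends: the paper offers no written proof, merely calling the lemma ``a direct consequence of the H\"older inequality,'' and your two successive applications of H\"older (in $x$ for a.e.\ $t$, then in $t$) spell out precisely that. The only point worth noting is that the paper's stated condition $\lambda\geq 1$ is evidently a typo for $0\leq\lambda\leq 1$, which you have implicitly and correctly adopted.
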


We improve in Lemma \ref{sigma} (see also Remark \ref{rar})
a result established in \cite{aronson}.
\begin{lemma}\label{sigma}
If $w\in L^{2,\infty}(Q_T)\cap L^2(0,T;H^1(\Omega))$, then 
 $w\in L^{\sigma 2q/(q-2),\sigma 2r/(r-2)}(Q_T)$ for all 
 $q,r>2$ and $1-2/q<\sigma\leq 1+2(1-n/q-2/r)/n$ that satisfy
\[
\sigma\leq n(q-2)/[q(n-2)]\quad (q>2),
\qquad (
\sigma>2/q-2/r \mbox{ if }n=2).
\]
Moreover,
\[
\|w\|_{\sigma{2q\over q-2},\sigma{2r\over r-2},Q_T}^2\leq T^{\nu(\sigma)}
\left(\|w\|_{2,\infty,Q_T}^2+C_n(\sigma)\left(\|\nabla w\|_{2,Q_T}^2+
\|w\|_{2,\Sigma_T}^2\right)\right),
\]
where 
\begin{align*}
\mbox{if }n>2 \quad &
\left\{\begin{array}{l}
\nu(\sigma)=(1-n/q-2/r)/\sigma +n(1/\sigma-1)/2\\
C_n(\sigma)=2(S_{2,2})^2
\end{array}\right.
\\
\mbox{if }n=2 \quad &
\left\{\begin{array}{l}
\nu(\sigma)=(1-1/q-1/r)/\sigma -1/2\\
C_2(\sigma)=2\left(S_{2s/(s+2),2s/(s+2)}\right)^2\left(|\Omega |^{1/s}
+|\Gamma|^{1/s}\right)
\end{array}\right. 
\end{align*}
with $s=\nu^{-1}(2\sigma^{-1}(1/q-1/r)+1)$.
\end{lemma}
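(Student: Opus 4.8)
The plan is to interpolate between the two pieces of regularity of $w$ and then convert the $H^1(\Omega)$-norm into the combination $\|\nabla w\|_{2,\Omega}+\|w\|_{2,\partial\Omega}$ that appears in the Gagliardo--Nirenberg / Sobolev trace estimate. First I would fix, for a.e.\ $t\in]0,T[$, a parameter $\vartheta\in[0,1]$ and use the H\"older interpolation on $\Omega$
\[
\|w(t)\|_{p,\Omega}\le \|w(t)\|_{2,\Omega}^{1-\vartheta}\,
\|w(t)\|_{2^*,\Omega}^{\vartheta},
\]
with $2^{*}=2n/(n-2)$ when $n>2$ (and $2^{*}$ any large exponent when $n=2$), choosing $\vartheta$ so that the space exponent $p$ equals $\sigma\,2q/(q-2)$. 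The condition $\sigma\le n(q-2)/[q(n-2)]$ is precisely what guarantees $\vartheta\le 1$, i.e.\ that $p\le 2^{*}$; the lower bound $\sigma>1-2/q$ (and the analogous constraint in dimension two) will be what keeps $\vartheta\ge 0$ and the time exponent below finite. Then I would raise to a suitable power, integrate in $t$, and apply H\"older in time, pulling $\|w\|_{2,\infty,Q_T}$ out of the first factor and leaving a time integral of $\|w(t)\|_{2^*,\Omega}$ to a power that must match $\sigma\,2r/(r-2)$ — this fixes the time-H\"older exponent and produces the factor $T^{\nu(\sigma)}$ with $\nu(\sigma)$ as stated.

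Next I would bound the remaining integral $\int_0^T\|w(t)\|_{2^*,\Omega}^{\,2}\,\mathrm{dt}$ (after the exponent bookkeeping the power on the $2^{*}$-norm is exactly $2$). For $n>2$ this is where the Sobolev constant $S_{2,2}$ of the $V_{2,2}$-embedding enters: $\|w(t)\|_{2^*,\Omega}\le S_{2,2}\bigl(\|\nabla w(t)\|_{2,\Omega}+\|w(t)\|_{2,\partial\Omega}\bigr)$ by the inequality of \cite{maggi} recalled after Theorem~\ref{teom}, and squaring and using $(a+b)^2\le 2a^2+2b^2$ gives the constant $C_n(\sigma)=2(S_{2,2})^2$ together with $\|\nabla w\|_{2,Q_T}^2+\|w\|_{2,\Sigma_T}^2$. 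For $n=2$ the role of $2^{*}$ is played by an arbitrary finite exponent, so I would instead embed $W^{1,2s/(s+2)}(\Omega)\hookrightarrow L^{2^*}(\Omega)$ and absorb the resulting $|\Omega|^{1/s}$, $|\Gamma|^{1/s}$ volume factors (via H\"older to pass from the $L^{2s/(s+2)}$ gradient to the $L^2$ gradient), which accounts for the stated form of $C_2(\sigma)$ and the identification of $s$ through $\nu$.

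Assembling the two factors — $T^{\nu(\sigma)}$ times $\|w\|_{2,\infty,Q_T}^{2(1-\vartheta)\cdot(\text{power})}$ times the bounded-by-$C_n(\sigma)(\|\nabla w\|_{2,Q_T}^2+\|w\|_{2,\Sigma_T}^2)$ factor — and then using $a^{1-s}b^{s}\le a+b$ (or more simply $a^{1-s}b^s\le \max\{a,b\}\le a+b$) to linearize the product into a sum gives the displayed inequality. The main obstacle is bookkeeping: one must verify that the single exponent $\sigma$ simultaneously controls the space exponent, the time exponent, and the admissible range, and that the interpolation power on $\|w\|_{2,\infty,Q_T}$ comes out to exactly $2$ after the time-H\"older step; the constraints $1-2/q<\sigma\le 1+2(1-n/q-2/r)/n$ and $\sigma\le n(q-2)/[q(n-2)]$ are exactly the feasibility conditions for this, and checking that $\nu(\sigma)\ge 0$ on this range (so that $T^{\nu(\sigma)}$ is genuinely a small-time factor) is the delicate point, together with the separate, more computational dimension-two argument where the auxiliary Sobolev exponent $2s/(s+2)$ must be chosen consistently with $\nu$.
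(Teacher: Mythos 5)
Your proposal follows essentially the same route as the paper's proof: a H\"older-in-time step producing the factor $T^{\nu(\sigma)}$, a space--time interpolation between $L^{2,\infty}(Q_T)$ and the Sobolev-embedded space $L^{s,2}(Q_T)$ (the paper packages this as Lemma~\ref{pp1}), a weighted Young inequality to linearize the resulting product into a sum, and the Sobolev/trace inequality with $(a+b)^2\leq 2(a^2+b^2)$ to produce $C_n(\sigma)$, with the same auxiliary exponent $2s/(s+2)$ in dimension two. The argument is correct and matches the paper's.
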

\begin{proof}
For any $\sigma>1-2/q$ and $s>\sigma 2q/(q-2)$ such that 
$\sigma 2r/(r-2)\leq 2/\lambda$,
applying successively the H\"older inequality, Lemma \ref{pp1},
and the Young inequality, we find
\begin{align*}
\|w\|_{\sigma {2q\over q-2},\sigma{2r\over r-2},Q_T}^2\leq T^\nu
\|w\|_{\sigma{2q\over q-2},2/\lambda,Q_T}^2\leq  T^\nu
\|w\|_{s,2,Q_T}^{2\lambda} 
\|w\|_{2,\infty,Q_T}^{2(1-\lambda)}\leq\\
\leq T^\nu\left(
\lambda  \|w\|_{s,2,Q_T}^2+(1-\lambda )
\|w\|_{2,\infty,Q_T}^{2}
\right),
\end{align*}
with 
\[
\lambda ={2s\over s-2}\left[{1\over 2}-{1\over\sigma}
\left({1\over 2}-{1\over q}\right)\right]
\quad\mbox{and}\quad \nu={1\over \sigma}\left(1-{2\over r}\right)-\lambda .
\]

If $n>2$, we choose $s=2^*=2n/(n-2)$ then the Sobolev embedding can  be applied
concluding the desired result.

If $n=2$, we choose $s=2+4\nu^{-1}\left[{1\over 2}-{1\over\sigma}
\left({1\over 2}-{1\over q}\right)\right]>2$ which implies that
$\lambda=\nu+2\left[{1\over 2}-{1\over\sigma}
\left({1\over 2}-{1\over q}\right)\right]$.
Therefore
we use the Sobolev embedding $W^{1,2s/(s+2)}(\Omega) \hookrightarrow L^s(\Omega)$
followed by the H\"older inequality in order to determine the constant $C_2
(\sigma)$,
namely,
\[
\|w\|_{s,\Omega}^2\leq (S_{2s/(s+2),2s/(s+2)})^2\left(|\Omega |^{1/s}
\|\nabla w\|_{2,\Omega}+|\Gamma|^{1/s}\|w\|_{2,\Gamma}\right)^2,
\]
finishing then the proof of Lemma \ref{sigma}.
\end{proof}

\begin{remark}\label{rar}
The existence of $\sigma$ satisfying $1-2/q<\sigma\leq 1+2(1-n/q-2/r)/n$
is given by $r>2$, while
 $\min\{ n(q-2)/[q(n-2)], 1+2(1-n/q-2/r)/n\}= 1+2(1-n/q-2/r)/n$ if and only if
 \[
 {n\over q}+{2-n\over r}\leq 1.\]
 For $n=2$, the existence of $\sigma$ satisfying
$\max\{2/q-2/r,1-2/q\}\leq 1<\sigma\leq 2(1-1/q-1/r)$ is guaranteed by $q,r>2$.
\end{remark}

Set
\begin{equation}\label{ninf}
 \|w\|_{\infty,Q_T}=
\lim_{N\rightarrow\infty} \|w\|_{p\chi^{N},q\chi^{N},Q_T},\quad\forall
w\in \cap_{1\leq p,q<\infty} L^{p,q}(Q_T),
 \end{equation}
where $p,q\geq 1$, and $\chi> 1$.

Next we improve the technical result, which  involves 
an additional term.
 \begin{lemma}\label{techni}
 Let $p,q> 1$, and $\chi<1$.
 If $w\in \cap_{1\leq p,q<\infty} L^{p,q}(Q_T)$ verifies
  \begin{equation}
 \|w\|_{p(1/\chi)^{m+1},q(1/\chi)^{m+1},Q_T}\leq (P\chi^{-2m})^{\chi^{m}}
 \|w\|_{p(1/\chi)^{m},q(1/\chi)^m,Q_T}+P_1P_2^{\chi^m},
 \end{equation}
for some  constants $P\geq 1$, $P_1\geq 0$, and $0<P_2\leq P$,
and for any $m\in\mathbb{N}_0$, then
 \begin{equation}\label{tech}
{\rm ess} \sup_{Q_T}|w|\leq P^{1\over 1-\chi}\chi^{-{\chi\over(1-\chi)^2}}
 \|w\|_{p,q,Q_T}+P_1
\sum_{i\geq 0}P^{\chi^{i+1}\over 1-\chi}
\chi^{{i\chi^{i+2}-( i+1)\chi^{i+1}\over (1-\chi)^2}}
P_2^{\chi^i}.
 \end{equation}
\end{lemma}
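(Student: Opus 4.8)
The plan is to iterate the hypothesized recursive inequality starting from $m=0$ and bound the resulting product and sum, then let the number of iterations go to infinity and invoke the definition (\ref{ninf}) of the essential supremum. First I would unroll the recursion: writing $a_m = \|w\|_{p(1/\chi)^m,q(1/\chi)^m,Q_T}$, the hypothesis reads $a_{m+1}\leq (P\chi^{-2m})^{\chi^m}a_m + P_1 P_2^{\chi^m}$, so by induction
\[
a_N \leq \left(\prod_{m=0}^{N-1}(P\chi^{-2m})^{\chi^m}\right)a_0
+ P_1\sum_{i=0}^{N-1}\left(\prod_{m=i+1}^{N-1}(P\chi^{-2m})^{\chi^m}\right)P_2^{\chi^i}.
\]
The first factor is $P^{\sum_{m=0}^{N-1}\chi^m}\chi^{-2\sum_{m=0}^{N-1}m\chi^m}$; since $0<\chi<1$ both exponent sums converge, namely $\sum_{m\geq0}\chi^m = 1/(1-\chi)$ and $\sum_{m\geq0}m\chi^m = \chi/(1-\chi)^2$, which gives exactly the prefactor $P^{1/(1-\chi)}\chi^{-\chi/(1-\chi)^2}$ in (\ref{tech}). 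The tail products $\prod_{m=i+1}^{N-1}$ are handled the same way, with the relevant exponent sums being $\sum_{m\geq i+1}\chi^m = \chi^{i+1}/(1-\chi)$ and $\sum_{m\geq i+1}m\chi^m$, which after simplification produces the claimed exponents $\chi^{i+1}/(1-\chi)$ on $P$ and $\bigl(i\chi^{i+2}-(i+1)\chi^{i+1}\bigr)/(1-\chi)^2$ on $\chi$ in the $i$-th summand.

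Next I would pass to the limit $N\to\infty$. On the left, (\ref{ninf}) applied with the exponents $p\chi^{-N},q\chi^{-N}$ — which, because $\chi<1$, tend to infinity — gives $a_N \to \mathrm{ess\,sup}_{Q_T}|w|$; I should note that (\ref{ninf}) is stated for $\chi>1$ and applied here with base $1/\chi>1$, so the identification is legitimate. On the right, each of the finitely many terms of the partial sum converges to the corresponding infinite-series term, and since all summands are nonnegative and (using $P_2\leq P$, $\chi<1$) the series converges (the factor $P_2^{\chi^i}\to 1$ while $P^{\chi^{i+1}/(1-\chi)}\to 1$ and the $\chi$-exponent tends to $0$, so the terms do not vanish — convergence instead comes from... ) — here I would be careful: the series $\sum_i P^{\chi^{i+1}/(1-\chi)}\chi^{(i\chi^{i+2}-(i+1)\chi^{i+1})/(1-\chi)^2}P_2^{\chi^i}$ has $i$-th term tending to $1$, so it does \emph{not} converge in general; this means the statement implicitly requires $P_1=0$, or that the bound is interpreted as an extended-real inequality, or that additional smallness ($P_2<1$ strictly with the $\chi$-exponent dominating) is in force. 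I would flag this and otherwise treat the sum formally as written, since monotone convergence lets the partial sums pass to the (possibly infinite) limit.

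The main obstacle, then, is not the algebra of geometric sums — that is routine bookkeeping — but rather the convergence and interpretation of the additive series in (\ref{tech}): establishing under what precise conditions on $P_1,P_2,\chi$ it is finite, and justifying the exchange of limit and infinite summation. A secondary technical point is the reconciliation of the direction of the exponent base ($\chi<1$ here versus $\chi>1$ in (\ref{ninf})) and confirming that the Moser-iteration hypothesis is genuinely of the stated form when it is eventually verified for the solution $u$. I expect the write-up to spend most of its care on bounding the series tail and on the limiting argument, with the unrolling of the recursion occupying only a few lines.
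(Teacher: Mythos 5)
Your proposal follows the same route as the paper's proof: the paper unrolls the recursion by induction, obtaining
\[
\|w\|_{p\chi^{-N},q\chi^{-N},Q_T}\leq P^{a_{0,N}}\chi^{-b_{0,N}}\|w\|_{p,q,Q_T}+P_1\sum_{i=0}^{N-1}P^{a_{i+1,N}}\chi^{-b_{i+1,N}}P_2^{\chi^i}
\]
with $a_{j_0,N}=\sum_{j=j_0}^{N-1}\chi^j$ and $b_{j_0,N}=\sum_{j=j_0}^{N-1}j\chi^j$, evaluates these geometric sums in closed form, and lets $N\to\infty$ via (\ref{ninf}) with base $1/\chi>1$ --- exactly the steps you outline. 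Two remarks. First, the convergence problem you flag is genuine and is \emph{not} resolved by the paper: it invokes d'Alembert's ratio criterion, claiming convergence because $P_2^{\chi}\chi^{(i+1)\chi^{i+1}}<P^{\chi^{i+1}}$ for all $i$, but since every exponent in the $i$-th summand tends to $0$ as $i\to\infty$, the general term tends to $1$ and the ratio of consecutive terms tends to $1$; the test is inconclusive, and the series in (\ref{tech}) actually diverges whenever $P_1>0$. Your reading that the statement implicitly needs $P_1=0$, an extended-real interpretation, or a corrected summand is the right one. Second, a bookkeeping point present in both your sketch and the paper: the product $\prod_{m=0}^{N-1}(P\chi^{-2m})^{\chi^m}$ contributes $\chi^{-2\sum_m m\chi^m}\to\chi^{-2\chi/(1-\chi)^2}$, not $\chi^{-\chi/(1-\chi)^2}$ as in (\ref{tech}); the factor $2$ in the hypothesis, the exponents in the conclusion, and the recursion actually produced in Proposition \ref{pmax} (which has $(P\sigma^{N})^{\sigma^{-N}}$, a single power of $N$) are mutually inconsistent, so one of them is a typo. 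Apart from these defects, which originate in the statement rather than in your argument, your unrolling, closed-form exponent sums, and limit passage coincide with the paper's proof.
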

\begin{proof}
By induction, we have for all $N\in\mathbb{N}$
\[ 
 \|w\|_{p\chi^{-N},q\chi^{-N},Q_T}\leq P^{a_{0,N}}\chi^{-b_{0,N}}
 \|w\|_{p,q,Q_T}+P_1\sum_{i=0}^{N-1}P^{a_{i+1,N}}\chi^{-b_{i+1,N}}
P_2^{\chi^i},
 \]
 where
 \begin{align*}
a_{j_0,N}&=\sum_{j=j_0}^{N-1}\chi^{j}={\chi^{j_0}-\chi^{N}\over 1-\chi}
;\\
 b_{j_0,N}&=\sum_{j=j_0}^{N-1}j\chi^{j}={j_0\chi^{j_0}+(1-j_0)
\chi^{j_0+1}-N \chi^N+(N-1)\chi^{N+1}
\over (1-\chi)^2}.
\end{align*}
Using  d'Alembert's ratio criterium,  the second series in (\ref{tech}) 
is convergent if $
P_2^{\chi}\chi^{(i+1)\chi^{i+1}}<P^{\chi^{i+1}}$. Indeed, this inequality is
true for all $i\in\mathbb{N}_0$,
for $\chi\leq 1$, $0<P_2\leq P$, and $P\geq 1$.

Letting  $N\rightarrow \infty$, we find (\ref{tech})
by the definition (\ref{ninf}).
\end{proof}

Finally, we are in position to establish the upper bound of any solution
of (\ref{pbum}), if $2/r+n/q<1$.
\begin{proposition}[Maximum principle]\label{pmax}
Let $u$ solve (\ref{pbu}). Under $b_\#>0$ in (\ref{bmm}), and
 $2/r+n/q<1$, we have
\begin{align}
u&\leq
P^{\sigma\over \sigma-1}\sigma^{\sigma\over(\sigma-1)^2}
 T^{\nu(1)\over 2}
\left(\|u\|_{2,\infty,Q_T}^2+C_n(1)\left(\|\nabla u\|_{2,Q_T}^2+
\|u\|_{2,\Sigma_T}^2\right)\right)^{1/2}\nonumber\\
&+P_1
\sum_{i\geq 0}P^{\sigma^{-i}\over \sigma-1}
\sigma^{{( i+1)\sigma^{-i+1}-i\sigma^{-i}\over (\sigma-1)^2}}
P_2^{\sigma^{-i}}:=\mathcal{M}\quad \mbox{in }Q_T;\label{cotamax1} \\
\label{cotamax2}
u&\leq {1\over b_\#}\left(\mathcal{M}+P_1\right) \quad \mbox{
on }\Sigma_T,
\end{align}
if provided by the smallness condition $P_2\leq P$, with
\begin{align*}
P&=\left({2T^{\nu(\sigma)}
\max\{1,C_n(\sigma)\}\over a_\#\min\{1,a_\#,b_\#\}}\right)^{1/2}
\|{\bf E}\|_{q,r,Q_T}\geq 1
;\\
P_1&=\left(
\max\{1, \|  u_0\|_{\infty,\Omega},\|  h\|_{\infty,\Sigma_T}\}\right)^{1/2};\\
P_2&=\left({T^{\nu(\sigma)}\max\{1,C_n(\sigma)\}\over\min\{1,a_\#,b_\#\}} 
\left(|\Omega |+( b_\#(\ell-2)+
\max\{1,1/b_\#\})|\Sigma_T|\right)\right)^{1/2}
;\\
\sigma &=1+\frac2n\left(1-{2\over r}-{n\over q}\right),
\end{align*}
where $\nu$ and $C_n$ are introduced in Lemma \ref{techni}.
\end{proposition}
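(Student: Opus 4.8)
The plan is to run the Moser iteration on the truncated approximating solutions $u_m$ of (\ref{pbum}) and then pass to the limit. First I would fix a level $k\geq 1$ (to be chosen as $\max\{1,\|u_0\|_{\infty,\Omega},\|h\|_{\infty,\Sigma_T}\}$, up to the square root bookkeeping that produces $P_1$) and, for an exponent $\gamma\geq 1$, test (\ref{pbum}) with $v=\chi(t,\tau)\,\mathrm{sgn}(u_m)\,(|u_m|-k)_+^{\,\gamma}$ (a Stampacchia-type test function, suitably regularized so it lies in $L^2(0,T;V_{2,\ell})$). Using ellipticity (\ref{amin}) for the diffusion term, the monotonicity/growth bound (\ref{bmm}) with $b_\#>0$ for the boundary integral, and Young's inequality on the drift term $T_m(u_m){\bf E}\cdot\nabla v$ (absorbing half of the gradient term into the left side), I get an energy inequality for $w:=(|u_m|-k)_+^{(\gamma+1)/2}$ of the schematic form
\[
\|w\|_{2,\infty,Q_\tau}^2+\min\{1,a_\#,b_\#\}\bigl(\|\nabla w\|_{2,Q_\tau}^2+\|w\|_{2,\Sigma_\tau}^2\bigr)\leq \frac{C}{a_\#}\|{\bf E}\|^2_{q,r}\,\|w\|_{*}^2+(\text{lower order}),
\]
where the lower-order piece collects the contribution of the set where the test function is supported, giving the factor $|\Omega|+(b_\#(\ell-2)+\max\{1,1/b_\#\})|\Sigma_T|$ that appears in $P_2$, and the extra $\ell-2$ comes from differentiating $b(s)s=b_\#|s|^{\ell-2}s$ type terms near the boundary.

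Second, I would feed the left-hand side into Lemma \ref{sigma} with the specific choice of interpolation exponent that produces $\sigma=1+\tfrac2n(1-\tfrac2r-\tfrac nq)$; this is exactly the value making the space $L^{\sigma\frac{2q}{q-2},\sigma\frac{2r}{r-2}}(Q_T)$ the right target, and the hypothesis $2/r+n/q<1$ is precisely what guarantees $\sigma>1$ (cf. Remark \ref{rar}). Then Hölder on the drift term converts $\|w\|_*^2$ into $\|{\bf E}\|_{q,r}^2$ times an $L^{\sigma\frac{2q}{q-2},\sigma\frac{2r}{r-2}}$-norm of $w$, i.e. an $L^{p(1/\chi),q(1/\chi)}$-norm of $(|u_m|-k)_+$ with $\chi=1/\sigma<1$. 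Iterating over levels $k_j$ and exponents $\gamma_j$ in the standard geometric way yields precisely the recursion hypothesis of Lemma \ref{techni}, with $P,P_1,P_2$ as defined in the statement; the conclusion (\ref{tech}) then gives the $L^\infty(Q_T)$ bound $\mathcal{M}$ for $u_m$ after reinserting the starting norm via (\ref{cotauinf})--(\ref{cotauu}) (this is where the $T^{\nu(1)/2}(\|u\|_{2,\infty}^2+C_n(1)(\|\nabla u\|_2^2+\|u\|_{2,\Sigma_T}^2))^{1/2}$ factor in (\ref{cotamax1}) originates, with $\sigma$ evaluated at $1$). The uniform-in-$m$ bound passes to $u$ along the subsequence constructed in Section \ref{est}, and since $T_m(u_m)=u_m$ once $m>\mathcal{M}$, the limit $u$ genuinely solves (\ref{pbu}) and satisfies $u\leq\mathcal{M}$; combined with the minimum principle (Proposition \ref{pmin}) this gives $0\leq u\leq\mathcal{M}$ in $Q_T$.

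Third, for the boundary estimate (\ref{cotamax2}) I would return to the energy inequality and isolate the boundary term: from (\ref{bmm}) with $b_\#>0$ one has $b_\#|u|^{\ell-1}\leq b(u)u$ on $\Gamma$, and testing (\ref{pbu}) with a function concentrated near $\Sigma_T$ (or revisiting the $\Sigma_T$ integral in the Moser step at $\gamma=0$) bounds $b_\#\|u\|_{\ell-1,\Sigma_T}$-type quantities by $\mathcal{M}+P_1$; dividing by $b_\#$ yields $u\leq(\mathcal{M}+P_1)/b_\#$ on $\Sigma_T$. The main obstacle I anticipate is bookkeeping the constants cleanly: one must track the powers of $\min\{1,a_\#,b_\#\}$, the $T^{\nu(\sigma)}$ and $C_n(\sigma)$ factors from Lemma \ref{sigma}, and the $\ell$-dependent boundary contribution through every iteration step so that they assemble exactly into the stated $P$, $P_1$, $P_2$; in particular verifying $P\geq 1$ and the admissibility $P_2\leq P$ (the smallness condition) requires care, and the choice of test function must be made with enough regularity that all integrations by parts and the use of (\ref{perp}) are legitimate uniformly in $m$. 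The analytic content, by contrast, is routine once Lemmas \ref{pp1}--\ref{techni} are in hand.
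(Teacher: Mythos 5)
Your overall architecture --- a Moser iteration whose energy inequality is fed into Lemma \ref{sigma} and whose resulting recursion is closed by Lemma \ref{techni} --- is indeed the paper's architecture, but three of your concrete steps do not survive inspection. First, the paper does not use level-set truncations $(|u_m|-k)_+^{\gamma}$: it tests with $\mathcal{G}(u)=(u^+)^{\beta}$, cut off at a height $M$ only to make the test function admissible, with $M\to\infty$ afterwards. This is not cosmetic. If you truncate at the level $k=\max\{1,\|u_0\|_{\infty,\Omega},\|h\|_{\infty,\Sigma_T}\}$, the data terms are absorbed into the level and cannot then reappear as the additive term $P_1P_2^{\chi^m}$ in the hypothesis of Lemma \ref{techni}; conversely, the whole reason that lemma carries its ``additional term'' is precisely so that no levels are needed and $u_0$, $h$ and the measure terms enter the recursion additively at every step. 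Your hybrid would produce a bound of the form $k+C\|\cdot\|$, not the stated $\mathcal{M}$ with the stated $P_1$, $P_2$. Second, your explanation of the factor $\ell-2$ is wrong: it does not come from ``differentiating $b(s)s$''; it comes from the elementary Young-type inequality $(u^+)^{\beta+1}\le \frac{\beta+1}{\beta+\ell-1}(u^+)^{\beta+\ell-1}+\frac{\ell-2}{\beta+\ell-1}$ integrated over $\Sigma_\tau$, which is needed because the coercivity of $b$ furnishes $b_\#\int_{\Sigma_\tau}(u^+)^{\beta+\ell-1}$ on the left while Lemma \ref{sigma} requires the boundary norm $\|w\|_{2,\Sigma_T}^2=\int(u^+)^{\beta+1}$.

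Third, your boundary step does not deliver a pointwise bound: controlling an $L^{\ell-1}(\Sigma_T)$ quantity by $\mathcal{M}+P_1$ and ``dividing by $b_\#$'' gives an integral estimate, not $u\le(\mathcal{M}+P_1)/b_\#$ a.e.\ on $\Sigma_T$. The paper instead keeps the term $b_\#\|w\|_{2,\Sigma_T}^2$ on the left of the energy inequality (\ref{usigm}) for \emph{every} exponent $\beta+1=2\sigma^N$, inserts the interior bound $\mathcal{M}$ on the right, and sends $N\to\infty$ via the definition (\ref{ninf}); only this passage to infinite boundary exponent yields the essential supremum on $\Sigma_T$. The remaining ingredients you list --- the choice $\sigma=1+\frac2n(1-\frac2r-\frac nq)$ dictated by $2/r+n/q<1$, the base-step application of Lemma \ref{sigma} with $\sigma=1$ producing the prefactor in (\ref{cotamax1}), and the observation that $T_m(u_m)=u_m$ once $m>\mathcal{M}$ --- are correct and match the paper.
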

\begin{proof}
 Set $\theta=1-2/r-n/q>0$.
Arguing as in \cite{aronson},
 the first step involves showing that, for almost all values $\tau$ in $]0,T[$
\begin{align}
 {1\over\beta+1}
\int_\Omega (u^+)^{\beta+1}(\tau)\mathrm{dx}+{a_\#\beta \over 2}\int_{Q_\tau}
(u^+)^{\beta-1} |\nabla u^+|^2\mathrm{dx}
\mathrm{dt}+\nonumber \\+
{b_\#\over\beta+1}\int_{\Sigma_\tau} (u^+)^{\beta+1}
\mathrm{ds}\mathrm{dt}
\leq  {\ell-2\over \beta+\ell-1}b_\#|\Sigma_\tau|+
 {1\over\beta+1}
\int_\Omega (u^+)^{\beta+1}(0)\mathrm{dx}+\nonumber\\+
{\beta\over 2a_\#}\int_{Q_\tau} (u^+)^{\beta+1}|{\bf E}|^2\mathrm{dx}
\mathrm{dt}+
{|\Sigma_\tau|\over (\beta+1)(b_\#)^\beta}\|  h\|_{\infty,\Sigma_\tau}
^{\beta+1}, \label{step1}
\end{align}
where $\beta\geq 1$, and $u^+=\max\{u,0\}$.
 Let us take  $v=\chi(t,\tau)\mathcal{G}(u)$
 as a test function in (\ref{pbum}), where
 $\chi(t,\tau)$ is the characteristic function of the open interval $]0,\tau[$,
with $\tau$ being a fixed number lesser than $T$, and
\[
\mathcal{G}(u)=
\left\{\begin{array}{ll}
(u^+)^\beta& \mbox{for } -\infty< u\leq M\\
M^{\beta-1}u& \mbox{for } M\leq u<+\infty 
\end{array}\right. .
\] 
Applying (\ref{amin}), it follows that
\begin{align*}
\int_{Q_\tau}\partial_t [\mathcal{H}(u)]\mathrm{dx}\mathrm{dt}
+{a_\#\over 2}\int_{Q_\tau}
\mathcal{G}'( u)|\nabla u^+|^2\mathrm{dx}
\mathrm{dt}+\int_{\Sigma_\tau} b(u)u\mathcal{G}(u)\mathrm{ds}\mathrm{dt}\leq\\
\leq {1\over 2a_\#}
\int_{Q_\tau} |u^+{\bf E}|^2\mathcal{G}'(u)\mathrm{dx}
\mathrm{dt}+\int_{\Sigma_\tau} h (u^+)^\beta\mathrm{ds}\mathrm{dt},
\end{align*}
with $ \mathcal{H}'(u)= \mathcal{G}(u)$, and considering Remark \ref{dtc}.
As the last boundary integral in the above inequality is new,
we analyze it  separately.
 Applying the H\"older and Young inequalities,
we deduce
\[ 
\int_{\Sigma_\tau} h (u^+)^\beta\mathrm{ds}\mathrm{dt}\leq 
{|\Sigma_\tau|\over (\beta+1)(b_\#)^\beta}\|  h\|_{\infty,\Sigma_\tau}
^{\beta+1}+
{b_\#\beta\over \beta+1}\int_{\Sigma_\tau}
  (u^+)^{\beta+1}\mathrm{ds}\mathrm{dt}.
\]
Letting the parameter $M$ tend to infinity
(since $\mathcal{G}'(u)\leq \beta(u^+)^{\beta-1}$), 
and applying (\ref{bmm}) with $b_\#>0$, we
 compute the boundary integral on the left hand-side as follows
\[
\int_{\Sigma_\tau}
  (u^+)^{\beta+1}\mathrm{ds}\mathrm{dt}\leq {\ell-2\over \beta+\ell-1}
|\Sigma_\tau|+
\int_{\Sigma_\tau}
  (u^+)^{\beta+\ell-1}\mathrm{ds}\mathrm{dt},
\]
 finding (\ref{step1}).

The second step involves showing that (\ref{step1}) implies
\begin{align}
\|w^\sigma \|_{{2q\over q-2},{2r\over r-2},Q_T}^{2/\sigma}\leq 
{T^\nu\max\{1,C_n\}\over
\min\{1,a_\#,b_\#\}}\left(
|\Omega| \|  u_0\|_{\infty,\Omega}^{\beta+1}+
\right. \nonumber \\
\left.+ {(\beta +1)^2\over 2a_\#}
\|{\bf E}\|_{q,r,Q_T}^2
\|w\|_{{2q\over q-2},{2r\over r-2},Q_T}^2+
 (\ell-2)b_\#|\Sigma_T|+
{|\Sigma_T|\over (b_\#)^\beta}\|  h\|_{\infty,\Sigma_T}
^{\beta+1}\right), \label{cotaw}
\end{align}
with $w=(u^+)^{(\beta+1)/2}$.

Multiplying (\ref{step1}) by $\beta+1$ we have
\begin{align}
\|w\|_{2,\infty,Q_T}^2
+a_\#\|\nabla w\|_{2,Q_T}^2
+b_\#\| w\|_{2,\Sigma_T}^2\leq 
|\Omega| \|  u_0\|_{\infty,\Omega}^{\beta+1}+\nonumber \\
+ {(\beta +1)^2\over 2a_\#}
\|{\bf E}\|_{q,r,Q_T}^2\|w\|_{{2q\over q-2},{2r\over r-2},Q_T}^2
+ (\ell-2)b_\#|\Sigma_T|+
{|\Sigma_T|\over (b_\#)^\beta}\|  h\|_{\infty,\Sigma_T}^{\beta+1}.\label{usigm}
\end{align}
On other hand, by taking $\sigma=1+2\theta /n$ $(n\geq 2)$, that is
$\nu=0$,
Lemma \ref{sigma} guarantees that (\ref{cotaw}) holds.

Next, returning to $u^+$, (\ref{cotaw}) becomes
\[
\varphi_{N+1}=\|u^+\|_{\sigma^{N+1}{2q\over q-2},\sigma^{N+1}{2r\over r-2},
Q_T}\leq (P\sigma^{N})^{\sigma^{-N}}\varphi_N+P_1P_2^{\sigma^{-N}},
\]
with $N\in\mathbb{N}$ and $(\beta+1)/2=\sigma^N$
 stand for the iterative argument (cf. Lemma \ref{techni}).
Therefore, we conclude (\ref{cotamax1}) making recourse of Lemma
\ref{sigma} with $\sigma=1$.

Finally, introducing the  upper bound $\mathcal{M}$ in (\ref{usigm}) we find
\begin{align*}
b_\#\| u^+\|_{\sigma^N,\Sigma_T}
\leq \sigma^{N\sigma^{-N}}\left({|\Sigma_T|\over 2a_\#}
\|{\bf E}\|_{q,r,Q_T}^2\right)^{\sigma^{-N}}\mathcal{M} 
+P_1P_2^{\sigma^{-N}}.
\end{align*}
Applying directly the definition (\ref{ninf})
we conclude (\ref{cotamax2}).
\end{proof}

\section{Proof of Theorem \ref{teomf}}
\label{steomf}

Let us reformulate Lemma \ref{sigma} under exponents $p$ being lesser or
equal than $n/(n-1)$.
\begin{lemma}\label{vpp1}
If $w\in L^{1,\infty}(Q_T)\cap L^{p_1}(0,T;W^{1,p}(\Omega))$, then 
 $w\in L^{\bar p,\bar q}(Q_T)$ for all $1\leq p\leq n/(n-1)$,
 $1\leq p_1<\bar q$ and 
\begin{equation}\label{pqbar}
{1\over \bar p}+{p_1\over \bar q} \left(1+{1\over n}-{1\over p}\right)=1.
\end{equation}
For $\lambda=p_1/\bar q<1$ we have
\begin{align}
\|w\|_{\bar p,\bar q,Q_T}\leq
\left(S _p (1+|\Omega|^{1/n}S_1)\right)^\lambda
\|\nabla w\|_{p,p_1,Q_T}^\lambda \|w\|_{1,\infty,Q_T}^{1-\lambda}+
\nonumber \\ +
(S_pS_1)^\lambda T^{1/\bar q}|\Omega|^{\lambda (1/p+1/n-1)}
\|w\|_{1,\infty,Q_T}.\label{wp1}
\end{align}
\end{lemma}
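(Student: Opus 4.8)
The plan is to reproduce, in the low-integrability range $1\le p\le n/(n-1)$, the interpolation-plus-Sobolev argument underlying Lemma \ref{sigma}, the only genuine change being that the Sobolev embedding $W^{1,p}(\Omega)\hookrightarrow L^{p^*}(\Omega)$ must now be supplemented by a boundary (or full-norm) term since $p$ is small and $\Omega$ is only Lipschitz. First I would fix the spatial interpolation: for a.e.\ $t$, write $\|w(t)\|_{\bar p,\Omega}\le \|w(t)\|_{p^*,\Omega}^{\lambda}\|w(t)\|_{1,\Omega}^{1-\lambda}$ via the H\"older inequality, with $p^*=pn/(n-p)$ and $\lambda$ chosen so that $1/\bar p=\lambda(1/p-1/n)+(1-\lambda)$; this is exactly the relation $1/\bar p+\lambda(1+1/n-1/p)=1$, and setting $\lambda=p_1/\bar q$ (which is $<1$ precisely because $p_1<\bar q$) turns it into (\ref{pqbar}). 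The constraint $p\le n/(n-1)$ is what guarantees $\lambda\le 1$, i.e.\ that the interpolation exponent is admissible.

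Next I would bound $\|w(t)\|_{p^*,\Omega}$. Here one cannot use the pure Sobolev constant $S_p$ on a Lipschitz domain with such a small exponent without a lower-order correction, so I would invoke the Gagliardo--Nirenberg--Sobolev-type inequality already quoted in the paper just after Theorem \ref{teom}, namely $\|w\|_{p^*,\Omega}\le S_p\|\nabla w\|_{p,\Omega}+S_1^{1/p_*}\|w\|_{p_*,\partial\Omega}$, together with the elementary estimates controlling $\|w\|_{p_*,\partial\Omega}$ and $\|w\|_{1,\Omega}$-type remainders; combining these and using H\"older in $x$ to pass from the $L^{p_*}(\partial\Omega)$ and $L^1(\Omega)$ pieces to a common norm produces the two structural constants $S_p(1+|\Omega|^{1/n}S_1)$ and $S_pS_1|\Omega|^{1/p+1/n-1}$ that appear in (\ref{wp1}). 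After raising to the power $\lambda$ and multiplying by $\|w(t)\|_{1,\Omega}^{1-\lambda}\le\|w\|_{1,\infty,Q_T}^{1-\lambda}$, I would integrate in time: since $\|w(t)\|_{\bar p,\Omega}^{\bar q}$ is being controlled by $\|\nabla w(t)\|_{p,\Omega}^{\lambda\bar q}=\|\nabla w(t)\|_{p,\Omega}^{p_1}$ times a time-independent factor, a single application of H\"older in $t$ (or directly, since $\lambda\bar q=p_1$) yields $\|w\|_{\bar p,\bar q,Q_T}$ bounded by $\|\nabla w\|_{p,p_1,Q_T}^{\lambda}\|w\|_{1,\infty,Q_T}^{1-\lambda}$ plus the lower-order term, where the factor $T^{1/\bar q}$ in the second summand comes from integrating the constant-in-$\nabla w$ contribution over $[0,T]$.

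The main obstacle is purely bookkeeping of the boundary correction term: on a merely $C^{0,1}$ domain the naive Sobolev constant $S_p$ is not available for $p<n$ in the form $\|w\|_{p^*}\le S_p\|\nabla w\|_p$, so the delicate point is to track how the trace/lower-order term $S_1^{1/p_*}\|w\|_{p_*,\partial\Omega}$ is absorbed and re-expressed through the $L^{1,\infty}(Q_T)$ norm of $w$ rather than through an $H^1$-type norm, which forces the extra $|\Omega|^{1/p+1/n-1}$ volume factor and the split into the two summands in (\ref{wp1}); everything else is a verbatim adaptation of the estimates in Lemma \ref{sigma} and Lemma \ref{pp1}. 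Once the spatial inequality is pinned down with the correct constants, the temporal integration and the verification that $\lambda<1$ under (\ref{pqbar}) are routine.
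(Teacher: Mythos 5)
Your overall skeleton --- spatial interpolation between $L^{pn/(n-p)}(\Omega)$ and $L^{1}(\Omega)$ with $\lambda=p_1/\bar q$ (which is exactly Lemma \ref{pp1} applied to the pair $L^{pn/(n-p),p_1}(Q_T)$ and $L^{1,\infty}(Q_T)$), a Sobolev-type bound for $\|w\|_{pn/(n-p),\Omega}$ in terms of $\|\nabla w\|_{p,\Omega}$ and $\|w\|_{1,\Omega}$, and a final time integration producing the factor $T^{1/\bar q}=T^{\lambda/p_1}$ --- is the paper's proof. One small misattribution: $\lambda<1$ comes from $p_1<\bar q$, not from $p\le n/(n-1)$; the role of $p\le n/(n-1)$ is to make the exponent $1/p+1/n-1$ nonnegative and to permit the H\"older reduction described below.

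The gap is in the one substantive step, the spatial inequality. You propose to obtain it from the Maggi--Villani inequality $\|w\|_{pn/(n-p),\Omega}\le S_p\|\nabla w\|_{p,\Omega}+S_1^{1/p_*}\|w\|_{p_*,\partial\Omega}$ and then to ``absorb'' the boundary term into the $L^{1,\infty}(Q_T)$ norm, flagging this as bookkeeping. It is not bookkeeping: an $L^1(\Omega)$ bound gives no control on a boundary trace, so $\|w\|_{p_*,\partial\Omega}$ can only be handled by going back through the trace theorem into $W^{1,p}(\Omega)$, which returns you to the interior term $\|w\|_{p,\Omega}$ you were trying to eliminate. The paper does not use the boundary-term inequality here at all. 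It uses the standard Sobolev inequality with the full norm, $\|w\|_{pn/(n-p),\Omega}\le S_p\left(\|\nabla w\|_{p,\Omega}+\|w\|_{p,\Omega}\right)$, then the H\"older estimate $\|w\|_{p,\Omega}\le|\Omega|^{1/p-(n-1)/n}\|w\|_{n/(n-1),\Omega}$ (legitimate precisely because $p\le n/(n-1)$), and then a second Sobolev embedding $W^{1,1}(\Omega)\hookrightarrow L^{n/(n-1)}(\Omega)$ together with $\|\nabla w\|_{1,\Omega}\le|\Omega|^{1-1/p}\|\nabla w\|_{p,\Omega}$; multiplying out the volume factors (note $|\Omega|^{1/p-(n-1)/n}\cdot|\Omega|^{1-1/p}=|\Omega|^{1/n}$) yields exactly the constants $S_p(1+|\Omega|^{1/n}S_1)$ and $S_pS_1|\Omega|^{1/p+1/n-1}$ appearing in (\ref{wp1}). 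With that chain in place, the rest of your argument (raise to the power $\lambda$, use $(a+b)^\lambda\le a^\lambda+b^\lambda$, integrate in time via Lemma \ref{pp1}) goes through as you describe.
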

\begin{proof}
Let us begin by establishing the following
 correlation between the Sobolev constants, 
 for all $1\leq p\leq n/(n-1)$,
\begin{align*}
\|w\|_{pn/(n-p),\Omega}\leq S_p\left(
\|\nabla w\|_{p,\Omega}+|\Omega|^{1/p-(n-1)/n}
\|w\|_{n/(n-1),\Omega}\right)\leq \\
\leq S _p 
\left((1+|\Omega|^{1/n}S_1)
\|\nabla w\|_{p,\Omega}+|\Omega|^{1/p-(n-1)/n}
S_1\|w\|_{1,\Omega}\right).
\end{align*}
Thanks to Lemma \ref{pp1} with the above inequality 
we conclude (\ref{wp1}).
\end{proof}

As in the proof of Theorem \ref{teom},
let us first take the existence of approximate solutions 
 $u_m$ in 
$ L^{2,\infty}(Q_T)\cap L^2(0,T;V_{2,\ell})$ such that
 $\partial_t u_m\in L^2(0,T;(V_{2,\ell})')$,
for each $m\in\mathbb{N}$, of the variational problem
\begin{align}
\int_0^T\langle\partial_t u_m,v\rangle\mathrm{dt} +\int_{Q_T}
( \mathsf{A}\nabla u_m+T_m(u_m){\bf E}) \cdot \nabla v\mathrm{dx}
\mathrm{dt}+\nonumber \\
+\int_{\Sigma_T} b(u_m)u_mv\mathrm{ds}\mathrm{dt}
=\int_{Q_T} {mf\over m+|f|} v\mathrm{dx}\mathrm{dt}+
\int_{\Sigma_T} h v\mathrm{ds}\mathrm{dt},\label{pbumf}
\end{align}
for all $v\in L^2(0,T;V_{2,\ell})$.

 Next, we deal to the
derivation of uniform estimates, and the passage to the limit in (\ref{pbumf}).

\subsection{$L^{1,\infty}(Q_T)$- and $L^{\ell-1}(\Sigma_T)$-estimates
 (\ref{cotauinfp}) for  $u_m$}
\label{sumf1}

Let $\varepsilon\in ]0,m[$ be arbitrary.
Choosing $v=\chi(t,\tau)T_1(u_m/\varepsilon)$ as a test function in (\ref{pbum}),
 where 
 $\chi(t,\tau)$ is the characteristic function of the open interval $]0,\tau[$,
with $\tau$ being a fixed number lesser than $T$,
and $T_1$ is the truncating function (\ref{trt})
with $m=1$, we obtain
\begin{align*}
\int_0^\tau
{d\over dt}\int_\Omega\left[\int_0^{u_m}T_1(z/\varepsilon)\mathrm{dz}\right]
\mathrm{dx}\mathrm{dt}+
b_\# \int_{\Sigma_\tau[|u_m|>\varepsilon]}
| u_m|^{\ell-1}\mathrm{ds}\mathrm{dt} \leq \\
\leq \int_{Q_\tau[|u_m|<\varepsilon]}
|{\bf E}||\nabla u_m|\mathrm{dx}\mathrm{dt} 
+\int_{Q_\tau}|f|\mathrm{dx}\mathrm{dt}
+\int_{\Sigma_\tau}|h|\mathrm{ds}\mathrm{dt},
\end{align*}
taking (\ref{amin}) and (\ref{bmm}) into account.
Passing to the limit as $\varepsilon$ tends to zero, (\ref{cotauinfp}) holds.

\subsection{$L^{p}$-estimate (\ref{cotauup}) to the gradient of $u_m$}
\label{sumf2}

Thanks to the estimate \cite[Lemma 4.4.5]{lap}
\begin{align*}
\|\nabla u_m\|_{p,Q_T}^p\leq \left(\int_{Q_T}{ |\nabla u_m|^2\over
(1+| u_m|)^{\delta +1}}\mathrm{dx}\mathrm{dt}\right)^{p/2}
\left(|Q_T|^{n\over p(n+1)}
+\right. \\ \left.
 +\|u_m\|_{p(n+1)/n,Q_T} \right)^{p(n+1)(2-p)\over 2n},
\end{align*}
for any $1<p<(n+2)/(n+1)$, and $\delta=(2-p)(n+1)/n-1\in ]0,1[$, considering that
by Lemma \ref{vpp1} with $\bar p=\bar q=p(n+1)/n$
and $p_1=p<(n+2)/(n+1)$, implies
\begin{equation}\label{pQT}
\|u_m\|_{{p(n+1)\over n},Q_T}\leq (Z_{1})^{n/(n+1)}
\|\nabla u_m\|_{p,Q_T}^{n\over n+1}
\mathcal{Z}^{1/(n+1)}+(Z_2)^{n/(n+1)} \mathcal{Z},
\end{equation}
 we deduce
\begin{align*}
{p\over 2}\|\nabla u_m\|_{p,Q_T}^p\leq 
{p\over 2} (Z_1)^{2-p}
\left(\int_{Q_T}{ |\nabla u_m|^2\over
(1+| u_m|)^{\delta +1}}\mathrm{dx}\mathrm{dt}\right)\mathcal{Z}^{2-p\over n}+\\
+\left(\int_{Q_T}{ |\nabla u_m|^2\over
(1+| u_m|)^{\delta +1}}\mathrm{dx}\mathrm{dt}\right)^{p/2}
\left(|Q_T|^{n\over p(n+1)}
 +(Z_2)^{n/(n+1)} \mathcal{Z}\right)^{p(n+1)(2-p)/(2n)},
\end{align*}
observing that the term in LHS is rearranged by using
$(a+b)^\varkappa\leq a^\varkappa +b^\varkappa$ with
$\varkappa= p(n+1)(2-p)/(2n)<1$, and
the Young inequality $ab\leq pa^{2/p}/2+(1-p/2)b^{2/(2-p)}$.
Reusing the Young inequality, and using
$(a+b)^\varkappa\leq 2(a^\varkappa +b^\varkappa)$ with
$\varkappa= p(n+1)/n<2$, we rewrite the above inequality as
\begin{align}
\|\nabla u_m\|_{p,Q_T}^p\leq 
\left(\int_{Q_T}{ |\nabla u_m|^2\over
(1+| u_m|)^{\delta +1}}\mathrm{dx}\mathrm{dt}\right)
\left( (Z_1)^{2-p}\mathcal{Z}^{2-p\over n}+1\right)+\nonumber \\
+{2(2-p)\over p}
\left(|Q_T| +(Z_2)^{p}\mathcal{Z}^{p(n+1)/n}\right).\label{numpp}
\end{align}

Thus, 
it remains to estimate the integral term.
From $L^1$-data theory (see, for instance, 
\cite{bopo,lap} and the references therein),
let us choose 
\[ 
v=-{{\rm sign}(u_m)}(1+|u_m|)^{-\delta} \in
L^2(0,T;W^{1,2}(\Omega))\cap L^\infty(Q_T),\quad
\mbox{for }\delta  >0,
\]
as a test function in (\ref{pbum}).
Using (\ref{amin}) and (\ref{bmm}), it follows that
\begin{align}
a_\#\int_{Q_T}{\delta |\nabla u_m|^2\over
(1+| u_m|)^{\delta +1}}\mathrm{dx}\mathrm{dt}
\leq  {1\over 1-\delta}(|Q_T|+ \|u_m\|_{1,\infty,Q_T})+\nonumber \\
+\|f\|_{1,Q_T}+\|h\|_{1,\Sigma_T}
+b^\#\int_{\Sigma_T}|u_m|^{\ell-1}\mathrm{ds}\mathrm{dt}+\nonumber \\
 +{\delta\over 2a_\#}\|{u_m\over
(1+| u_m|)^{\delta+1\over 2}}\|_{2q/(q-2),2r/(r-2),Q_T}^2\|{\bf E}\|_{q,r,Q_T}^2+
{a_\#\delta\over 2}\|{\nabla u_m\over
(1+| u_m|)^{\delta+1\over 2}}\|_{2,Q_T}^2.\label{uell}
\end{align}

Since (\ref{pqrn}) implies that $\bar p=(1-\delta)q/(q-2)$, 
 $\bar q=(1-\delta)r/(r-2)$, and $p_1=p$ satisfy (\ref{pqbar}),
then we compute
\begin{align*}
\|{u_m\over
(1+| u_m|)^{\delta+1\over 2}}\|_{{2q\over q-2},{2r\over r-2},Q_T}^2\leq 
\|u_m\|_{\bar p,\bar q,Q_T}^{1-\delta}\leq\\
\leq
(Z_{1})^{p(r-2)/r}
\|\nabla u_m\|_{p,Q_T}^{p(r-2)/r}
\mathcal{Z}^{(1-\delta)(1-\lambda)}+(Z_2)^{p(r-2)/r} \mathcal{Z}^{1-\delta},
\end{align*}
with $\lambda=p(r-2)/[(1-\delta)r]<1$ because $r(2-p)<2np$.
Inserting these two above inequalities into (\ref{numpp}),
 we conclude
\begin{align*}
{2\over r}\|\nabla u_m\|_{p,Q_T}^p\leq {2(2-p)\over p}
\left(|Q_T| +(Z_2)^{p}\mathcal{Z}^{p(n+1)/n}\right)+\\
+{2\over a_\#\delta}
\left( (Z_1)^{2-p}\mathcal{Z}^{2-p\over n}+1\right)
\left( {|Q_T|\over 1-\delta}+\mathcal{Z}
((2-\delta)/(1-\delta)+b^\#/b_\#)\right)+\\
+{1\over (a_\#)^2}
\left( (Z_1)^{2-p}\mathcal{Z}^{2-p\over n}+1\right)\|{\bf E}\|_{q,r,Q_T}^2
(Z_2)^{p(r-2)/r} \mathcal{Z}^{1-\delta}+\\
+{2\over r(a_\#)^r}
\left( (Z_1)^{2-p}\mathcal{Z}^{2-p\over n}+1\right)^{r/2}\|{\bf E}\|_{q,r,Q_T}^r
(Z_1)^{p(r-2)/2} \mathcal{Z}^{r(1-\delta)-p(r-2)\over 2},
\end{align*}
and therefore replacing $\delta$ by its value (\ref{cotauup}) holds.

\subsection{Estimate of $\partial_t u_m$  in $L^{1}(0,T;( W^{1,p'}(\Omega))')$}
\label{sumf3}

For every $v\in W^{1,p'}(\Omega)\hookrightarrow
C(\bar\Omega)$, and for almost all $t\in ]0,T[$, we have
\begin{align*}
|\langle\partial_t u_m(t),v\rangle|\leq
a^\#\|\nabla u_m(t)\|_{p,\Omega}\|\nabla v\|_{p',\Omega}+\\ +
(b^\#\|u_m(t)\|_{\ell-1,\Gamma}+\|h\|_{1,\Gamma})\|v\|_{\infty,\Gamma}
+\|u_m(t)\|_{pq/(q-p),\Omega}\|{\bf E}(t)
\|_{q,\Omega}\|\nabla v\|_{p',\Omega}.
\end{align*}
Similarly to (\ref{hold})-(\ref{sn2}), we have
\begin{align*}
 \|u_m\|_{{pq\over q-p},\Omega}\leq  \|u_m\|_{{np\over n-p},\Omega}^\lambda
  \|u_m\|_{1,\Omega}^{1-\lambda},\quad 
\lambda={np\over np+p-n}\left(1-{1\over p}+{1\over q}\right);\\
\|u_m\|_{np/(n-p),\Omega}\leq S_{p,\ell-1}\left(
\|\nabla u_m\|_{p,\Omega}+\|u_m\|_{\ell-1,\Gamma}\right),
\end{align*}
then it follows that 
\begin{align*}
\|\partial_t u_m(t)\|_{( W^{1,p'}(\Omega))'}\leq
(a^\#+S_{p,\ell-1})\|\nabla u_m(t)\|_{p,\Omega}+\|h\|_{1,\Gamma}
+\\ +
(b^\#C_\infty +S_{p,\ell-1})\|u_m(t)\|_{\ell-1,\Gamma}
+\|u_m(t)\|_{1,\Omega}
\|{\bf E}(t)\|_{q,\Omega}^{1/(1-\lambda)},
\end{align*}
where $C_\infty$ denotes the constant of continuity of the Morrey embedding
$W^{1,p'}(\Omega)\hookrightarrow C(\bar\Omega)$.
 Since (\ref{pqrn}) means that $1/(1-\lambda)=q(p(n+1)-n)/[p(q-n)]=r/p$,
 we deduce
\begin{align*}
\|\partial_t u_m\|_{L^{1}(0,T;( W^{1,p'}(\Omega))')}\leq
(a^\#+S_{p,\ell-1}) T^{1-1/p}\|\nabla u_m\|_{p,Q_T}+
\|h\|_{1,\Sigma_T}
+\\
+(b^\#C_\infty +S_{p,\ell-1}) T^{1-1/(\ell-1)}\|u_m\|_{\ell-1,p,\Sigma_T}+
\|u_m\|_{1,\infty,Q_T} T^{1-1/p}
\|{\bf E}\|_{q,r,Q_T}^{r/p}.
\end{align*}
The sequence on the right-hand side of this last relation is uniformly bounded
due to the estimates (\ref{cotauinfp})-(\ref{cotauup}).

\subsection{Passage to the limit in (\ref{pbum}) as $m\rightarrow\infty$}

By Sections \ref{sumf1} and \ref{sumf2}
 we may extract a subsequence of $\{u_m\}$ still denoted by $\{u_m\}$
such that $u_m\rightharpoonup u$ in
$L^\iota (0,T;V_{p,\ell-1})$ for $\iota=\min\{p,\ell-1\}$.

Since $V_{p,\ell-1}\hookrightarrow\hookrightarrow L^q(\Omega)\hookrightarrow
 L^p(\Omega)\hookrightarrow W^{1,p'}(\Omega)$  for any $q<pn/(n-p)$,
and $V_{p,\ell-1}\hookrightarrow\hookrightarrow L^q(\Gamma)\hookrightarrow
 L^p(\Gamma)\hookrightarrow W^{1,p'}(\Omega)$  for any $q<p(n-1)/(n-p)$,
according to Section \ref{sumf3} the Aubin-Lions Lemma yields that 
 $\{u_m\}$ is relatively compact into $L^{q,\iota }(Q_T)$ for any $q<pn/(n-p)$,
and $L^{q,\iota }(\Sigma_T)$ for any $q<p(n-1)/(n-p)$.
In particular, $|u_m|^{\ell-2}$
 converges to $|u|^{\ell-2}$ a.e. on $\Sigma_T$.
As the Nemytskii  operator  $b$ is  continuous,
 $b(u_m)$ strongly converges to $b(u)$ in $L^{(\ell-1)/(\ell-2)}(\Sigma_T)$.
Therefore, (\ref{pbumf}) passes to the limit as $m$ tends to infinity,
concluding that $u$ solves (\ref{pbuf}).

\section{The case of $b_\#=0$}

The following proofs pursue the ones that are established in Sections
\ref{steom}, and \ref{steomf}.
Therefore, we only focus our attention to the
quantitative  estimates.

\subsection{Proof of Theorem \ref{tb0}}

We observe that (\ref{stepm}) reads
\begin{align*}
{1\over 2}\int_{\Omega}|u_m|^2(\tau)\mathrm{dx}
+a_\#\int_{Q_\tau}|\nabla u_m|^2\mathrm{dx}\mathrm{dt}
\leq 
{1\over 2}\|u_0\|_{2,\Omega}^2+\\ +
\int_0^{\tau} \|u_m\|_{2q/(q-2),\Omega}\|{\bf E}\|_{q,\Omega}
\|\nabla u_m\|_{2,\Omega}\mathrm{dt}
+\int_0^\tau\|h \|_{(2_*)',\Gamma} \|u_m\|_{2_*,\Gamma}\mathrm{dt} .
\end{align*}
The last term of RHS is computed as follows
\begin{align*}
\int_0^\tau\|h \|_{(2_*)',\Gamma} \|u_m\|_{2_*,\Gamma}\mathrm{dt}\leq
 K_{s}|\Omega|^{{1\over s}
-{1\over 2}}
\int_0^\tau \|h \|_{(2_*)',\Gamma}
\left(\|\nabla u_m\|_{2,\Omega}+\|u_m\|_{2,\Omega}\right)\mathrm{dt}
\\
\leq \left({1\over a_\#}+{1\over 2}\right)( K_{s})^2|\Omega|^{2/s-1}
\|h \|_{(2_*)',2,\Sigma_\tau}^2+
{a_\#\over 4}\|\nabla u_m\|_{Q_\tau}^2+
{1\over 2}\int_0^\tau\|u_m\|_{2,\Omega}^2\mathrm{dt},
\end{align*}
where $s=2_*n/(2_*+n-1)$. Thus, we may proceed as Section \ref{est}
to conclude (\ref{cotaub0}), and subsequently (\ref{cotaub1}).
The remaining proof follows {\it mutatis mutandis}.

\subsection{Proof of Theorem \ref{tb0f}}

The estimate (\ref{cotaub0p}) is a direct consequence of Section \ref{sumf1}.
To show that (\ref{cotauub0p}) holds,
it suffices to pay attention in (\ref{uell})
to the boundary integral, which obeys the following lemma.
\begin{lemma}\label{luell}
If $p(n-1)<n$ and $\ell\leq p+1$, then
\begin{align*}
\int_{\Sigma_T}|v|^{\ell-1}\mathrm{ds}\mathrm{dt}\leq
T^{1-{(\ell-1)/p}} |\Gamma |^{1-{(\ell-1)/ p_*}}
K_p^{\ell-1}\left(2\| \nabla v\|_{p,Q_T}+\right.\\ \left. +
(S_{1}^{n(p-1)\over n-p(n-1)}|\Omega|^{n(p-1)^2\over (n-p(n-1))p}
+S_{1}^{n(p-1)\over p})\|v\|_{1,p,Q_T}\right)^{\ell-1}.
\end{align*}
for every $v\in L^p(0,T;W^{1,p}(\Omega))\cap L^{1,p}(Q_T)$.
\end{lemma}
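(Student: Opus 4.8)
The plan is to bound the boundary integral $\int_{\Sigma_T}|v|^{\ell-1}\,\mathrm{ds}\,\mathrm{dt}$ by first separating the time and space behaviour via H\"older's inequality in the time variable, and then controlling the spatial trace norm $\|v(t)\|_{\ell-1,\Gamma}$ through the embedding $W^{1,p}(\Omega)\hookrightarrow L^{p_*}(\Gamma)$ (whose constant is $K_p$, defined in the excerpt) together with a Gagliardo--Nirenberg/Sobolev interpolation that produces the $\|v\|_{1,p}$ term.

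First I would note that since $\ell\leq p+1$ and $p(n-1)<n$, we have $\ell-1\leq p\leq p_*=p(n-1)/(n-p(n-1))$, so on the boundary $|\Gamma|$-H\"older gives $\|v(t)\|_{\ell-1,\Gamma}\leq |\Gamma|^{1/(\ell-1)-1/p_*}\|v(t)\|_{p_*,\Gamma}$, and then the trace embedding gives $\|v(t)\|_{p_*,\Gamma}\leq K_p\|v(t)\|_{W^{1,p}(\Omega)}$. Next, to convert the full $W^{1,p}$-norm into the $\|\nabla v\|_p$ plus $\|v\|_{1,p}$ combination appearing in the statement, I would estimate $\|v(t)\|_{p,\Omega}$ in terms of $\|\nabla v(t)\|_{p,\Omega}$ and $\|v(t)\|_{1,\Omega}$; this is exactly the kind of correlation between Sobolev constants already carried out in the proof of Lemma~\ref{vpp1}, interpolating $L^p$ between $L^1$ and $L^{pn/(n-p)}$ and then applying $W^{1,p}\hookrightarrow L^{pn/(n-p)}$. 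Raising to the power $\ell-1$ and using $(a+b)^{\ell-1}\leq 2^{\ell-2}(a^{\ell-1}+b^{\ell-1})$ (valid since $\ell-1\leq p<2$ would give $\ell-1\leq 1$... actually $\ell-1\leq p$ and $1<p<(n+2)/(n+1)<2$, so $\ell-1<2$) keeps the constants of the claimed shape; the powers of $S_1$ and $|\Omega|$ emerge directly from tracking the exponents in that interpolation.

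Then I would integrate in time: $\int_0^T\|v(t)\|_{\ell-1,\Gamma}^{\ell-1}\,\mathrm{dt}\leq T^{1-(\ell-1)/p}\bigl(\int_0^T\|v(t)\|_{\ell-1,\Gamma}^{p}\,\mathrm{dt}\bigr)^{(\ell-1)/p}$ by H\"older with exponents $p/(\ell-1)$ and its conjugate (legitimate since $\ell-1\leq p$), which produces the $T^{1-(\ell-1)/p}$ prefactor and turns the right-hand side into the $p$-th time-integral, i.e. into norms $\|\nabla v\|_{p,Q_T}$ and $\|v\|_{1,p,Q_T}$ after using the spatial bound above and Minkowski's integral inequality. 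Collecting the constants gives precisely the asserted inequality.

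The main obstacle will be the careful bookkeeping of the exponents on $S_1$ and $|\Omega|$: one must check that the interpolation exponent $\lambda$ between $L^1$ and $L^{pn/(n-p)}$ that lands on $L^p$, combined with the embedding constants, yields exactly the powers $n(p-1)/(n-p(n-1))$, $n(p-1)^2/((n-p(n-1))p)$ and $n(p-1)/p$ in the statement, and that the condition $p(n-1)<n$ is what makes $pn/(n-p)$ and these exponents finite and positive. The analytic content is routine; the risk is purely in getting the algebra of exponents to match the claimed form.
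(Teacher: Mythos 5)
Your skeleton (H\"older on $\Gamma$ from $L^{p_*}$ down to $L^{\ell-1}$, the trace embedding with constant $K_p$, an interpolation to replace $\|v\|_{p,\Omega}$ by gradient and $L^1$ terms, then H\"older in time with exponent $p/(\ell-1)$ to extract $T^{1-(\ell-1)/p}$) coincides with the paper's proof. But the one step you leave to ``bookkeeping'' is exactly where your route diverges and would not yield the stated constants. You propose to interpolate $L^p$ between $L^1$ and $L^{pn/(n-p)}$ and then invoke $W^{1,p}\hookrightarrow L^{pn/(n-p)}$; that necessarily introduces the constant $S_p$ into the bound, whereas the lemma's constants involve only $S_1$. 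The paper instead interpolates between $L^1$ and $L^{n/(n-1)}$ with exponent $\lambda=n(p-1)/p$ and uses the $W^{1,1}\hookrightarrow L^{n/(n-1)}$ embedding (constant $S_1$) followed by H\"older to pass from $\|\nabla v\|_{1,\Omega}$ to $|\Omega|^{1-1/p}\|\nabla v\|_{p,\Omega}$. The hypothesis $p(n-1)<n$ is there precisely to make this $\lambda<1$ (not, as you suggest, to keep $pn/(n-p)$ finite, which only needs $p<n$).

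You also skip the step that produces the additive form and the specific exponents: after the interpolation one has the product $S_1^{\lambda}|\Omega|^{\lambda(1-1/p)}\|\nabla v\|_{p,\Omega}^{\lambda}\|v\|_{1,\Omega}^{1-\lambda}$, and the paper applies Young's inequality $a^{\lambda}b^{1-\lambda}\leq a+b$ after redistributing the constant onto the $b$-factor; this is what generates $S_1^{\lambda/(1-\lambda)}=S_1^{n(p-1)/(n-p(n-1))}$ and $|\Omega|^{n(p-1)^2/((n-p(n-1))p)}$, and it is also where the factor $2$ in front of $\|\nabla v\|_{p,Q_T}$ comes from (one gradient term from the trace inequality, one from Young). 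Relatedly, your aside about splitting $(a+b)^{\ell-1}$ by $2^{\ell-2}$ is not needed and not what the statement does: the sum stays inside the single power $(\cdot)^{\ell-1}$, and the passage from the pointwise-in-time bound to the $Q_T$-norms is by Minkowski after the time-H\"older step. In a paper whose purpose is explicit constants, these are not cosmetic differences: your construction proves an inequality of the same shape but with different (and $S_p$-dependent) constants, so it does not prove the lemma as stated.
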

\begin{proof}
We apply firstly the H\"older inequality, and secondly the trace embedding
for $\ell-1\leq p_*$ to obtain
\[
\|v\|_{\ell-1,\Gamma}\leq |\Gamma |^{1/(\ell-1)-1/p_*}
K_p(\| \nabla v\|_{p,\Omega}+\|v\|_{p,\Omega}).
\]
We separately apply the interpolative inequality and after
the Sobolev embedding and the H\"older inequality, obtaining
\begin{align*}
\|v\|_{p,\Omega}\leq \|v\|_{n/(n-1),\Omega}^\lambda \|v\|_{1,\Omega}^{1-\lambda}
\qquad (\lambda=n(p-1)/p<1)\\
\leq S_{1}^\lambda |\Omega|^{\lambda(1-1/p)}
\|\nabla v\|_{p,\Omega}^\lambda\|v\|_{1,\Omega}^{1-\lambda}+S_1^\lambda
\| v\|_{1,\Omega}.
  \end{align*}
  Thus, inserting this last inequality into the above one, we deduce
\[
\|v\|_{\ell-1,\Gamma}\leq |\Gamma |^{{1\over\ell-1}-{1\over p_*}}
K_p\left(2\| \nabla v\|_{p,\Omega}+
(S_{1}^{\lambda/(1-\lambda)}|\Omega|^{\lambda(p-1)\over (1-\lambda)p}
+S_{1}^\lambda)\|v\|_{1,\Omega}\right).
\]
  Integrating in time, and applying the H\"older inequality,
 the proof is  complete.
\end{proof}

Then, Lemma \ref{luell} implies that (\ref{cotauup}) for $u_m$ is rewritten as 
\begin{align*}
\|\nabla u_m\|_{p,Q_T}^p \leq \mathcal{B}
+2^{2\ell-3}\beta\| \nabla u_m\|_{p,Q_T}^{\ell-1}
+ \\ +2^{\ell-2}\beta\left(
(S_{1}^{n(p-1)\over n-p(n-1)}|\Omega|^{n(p-1)^2\over (n-p(n-1))p}
+S_{1}^{n(p-1)\over p})T\mathcal{Z}\right)^{\ell-1}.
\end{align*}
If $\ell-1=p$, supposing that $\beta<2^{1-2p}$ then we find (\ref{cotauub0p}).
If $\ell-1<p$, we conclude (\ref{cotauub0p}) by considering the
Young inequality. The proof of Theorem \ref{tb0f} follows the argument
of Section \ref{steomf}.


\begin{thebibliography}{99}

\bibitem{akagi}
G. Akagi,
{Energy solutions of the Cauchy-Neumann problem for porous medium 
equations}, 
{\em The Seventh International Conference on Dynamical Systems and Differential Equations}, a supplement volume of Discrete and Continuous Dynamical Systems, American Institute of Mathematical Sciences, 2009, pp. 1-10.

\bibitem{amann}
H. Amann, 
{Existence and regularity for semilinear parabolic evolution equations},
{\em Annali della Scuola Normale Superiore di Pisa} {\bf 11}  (1984), 593-676.

\bibitem{shill}
K.T. Andrews, A. Mikelic, P. Shi, M. Shillor, and S. Wright,
{One-dimensional thermoelastic contact with a stress-dependent 
radiation condition},
{\em SIAM J. Math. Anal.} {\bf 23} :6 (1992), 1393-1416.

\bibitem{arwarma}
 W. Arendt and M. Warma, 
{The Laplacian with Robin boundary conditions
on arbitrary domains}, {\em Potential Anal.} {\bf 19}  (2003), 341-363.

\bibitem{aronson}
D.G. Aronson and J. Serrin,
{Local behavior of solutions of quasilinear parabolic equations},
{\em Arch. Rational Mech. Anal.} {\bf 25} (1967), 81-122.

\bibitem{bv79}
H. Beir\~ao da Veiga,
{Inhomogeneous evolution equations in Banach spaces with a bounded variation data}, 
{\em Nonlinear Analysis} {\bf 3} :2 (1979), 249-259. 

\bibitem{bocc}
L. Boccardo, L. Orsina and A. Porretta,
{Some noncoercive parabolic equations with lower order terms
in divergence form},
{\em J. Evol. Equ.} {\bf 3}  (2003), 407-418.

\bibitem{bopo}
L. Boccardo, M.M. Porzio and A. Primo,
{Summability and existence results for nonlinear parabolic equations},
{\em Nonlinear Analysis} {\bf 71} (2009), 978-990.

\bibitem{choi}
J. Choi and S. Kim,
{Green's function for second order parabolic systems with
Neumann boundary condition},
{\em J. Differential Equations} {\bf 254} (2013), 2834-2860.

\bibitem{lap}
{L. Consiglieri}, 
{\em Mathematical analysis of selected problems from fluid thermomechanics.
The  $(p-q)$ coupled fluid-energy systems}, 
Lambert Academic Publishing, Saarbr\"ucken 2011.

\bibitem{arxA}
{L. Consiglieri}, 
{Explicit estimates for  solutions of  mixed elliptic problems},
{\em International Journal of Partial Differential Equations} {\bf 2014} (2014),
Article ID 845760, 17 pages.

\bibitem{arxB}
{L. Consiglieri},
{Explicit estimates for  solutions of nonlinear radiation-type problems},
arXiv:1312.0625

\bibitem{druet}
P.-\'E. Druet,
{Existence of weak solutions to the time-dependent MHD equations
coupled to the heat equation with nonlocal radiation
boundary conditions},
{\em Nonlinear Analysis: Real World Applications} {\bf 10} (2009), 2914-2936.

\bibitem{freh}
J. Frehse, J. M\'alek, M. Ru\v zicka,
{Large data existence result for unsteady flows
of inhomogeneous shear-thickening heat-conducting incompressible fluids},         
{\em Commun. Part. Diff. Equat.} {\bf 35} :10 (2010), 1891-1919.

\bibitem{giaq-str}
M. Giaquinta and M. Struwe,
{An optimal regularity result for a class of quasilinear parabolic systems},
{\em Manuscripta Math.} {\bf 36} (1981), 223-239.

\bibitem{hofm}
S. Hofmann and J.L. Lewis,
{The $L^p$ Neumann problem for the heat equation in
non-cylindrical domains},
{\em J. Functional Analysis} {\bf 220} (2005), 1-54.

\bibitem{homb}
D. H\"omberg,
{A mathematical model for induction hardening including mechanical effects},
{\em Nonlinear Analysis - Real World Applications}
{\bf 5} :1 (2004), 55-90. 
 
\bibitem{hung}
N.M. Hung and N.T. Anh,
{The Cauchy-Neumann problem for parabolic equations in domains with
conical points}, {\em Taiwanese Journal of Mathematics} {\bf 12} :7 (2008),
1849-1864.

\bibitem{lapt}
G.I. Laptev.
{Weak solutions of second-order quasilinear parabolic equations with double non-linearity}.
{\em Mat. Sb.} {\bf 188} :9 (1997), 83-112.

\bibitem{lieb}
G.M. Lieberman, 
{The first initial-boundary value problem
 for quasilinear second order parabolic equations},
{\em Annali della Scuola Normale Superiore di Pisa} {\bf 13} :3 (1986), 347-387.

\bibitem{liu}
W. Liu,
{Existence and uniqueness of solutions to nonlinear evolution equations with locally monotone operators},
{\em Nonlinear Analysis: Theory, Methods \& Applications}
{\bf 74} :18 (2011), 7543-7561.

\bibitem{maggi}
F. Maggi and C. Villani,
{Balls have the worst best Sobolev inequalities},
{\em J. Geom. Anal.} {\bf 15} :1 (2005), 83-121.

\bibitem{marino}
M. Marino and  A. Maugeri,
{$L^{2,\lambda}$ regularity of the spatial derivatives of the solutions
to parabolic systems in divergence form},
{\em Annali di Matematica Pura ed Applicata (IV)} {\bf 164} (1993), 275-298.

\bibitem{nitt11}
R. Nittka,
{Regularity of solutions of linear second order elliptic and parabolic boundary value problems on Lipschitz domains},
{\em J. Differential Equations} {\bf 251} :4 (2011),  860-880.

\bibitem{nitt13}
R. Nittka,
{Quasilinear elliptic and parabolic Robin problems on Lipschitz domains},
 {\em Nonlinear Differential Equations and Applications NoDEA}
{\bf 20} :3 (2013),  1125-1155.

 \bibitem{safa}
Y. Safa, M. Flueck and J. Rappaz,
 {Numerical simulation of thermal problems coupled
with magnetohydrodynamic effects in aluminium cell},
{\em Applied Mathematical Modelling} {\bf 33} :3 (2009), 1479-1492.

\bibitem{show}
R.E. Showalter,
{\em Monotone operators in Banach space and nonlinear partial
 differential equations},
 Mathematical Surveys and Monographs {\bf 49},
 American Mathematical Soc., Providence RI 1997, 2013.

\bibitem{tolk}
P. Tolksdorf,
{On some parabolic variational problems with quadratic growth},
{\em Annali della Scuola Normale Superiore di Pisa} {\bf 13} :2 (1986), 193-223.

\bibitem{wang}
J. Wang, {Global heat kernel estimates},
{\em Pacific Journal of Mathematics} {\bf 178} :2 (1997), 377-398.


\end{thebibliography}
\end{document}